\documentclass[11pt]{article}
\usepackage{amsmath,enumerate,amsfonts,color,amssymb,amsthm,ulem}

\usepackage{tikz}

\setlength{\oddsidemargin}{0.25in}
\setlength{\textwidth}{6in}
\setlength{\topmargin}{-0.25in}
\setlength{\textheight}{8in}

\def\NN{{\mathbb N}}

\def\RR{{\mathbb R}}

\def\mcA{{\mycal A}}
\def\mcB{{\mycal B}}

\def\mcF{{\mycal F}}

\def\eps{{\varepsilon}}

\newtheorem{theorem}{Theorem}
\newtheorem{proposition}{Proposition}
\newtheorem{lemma}{Lemma}
\newtheorem{definition}{Definition}
\newtheorem{remark}{Remark}

\newcounter{marnote}

\DeclareFontFamily{OT1}{rsfs}{}
\DeclareFontShape{OT1}{rsfs}{m}{n}{ <-7> rsfs5 <7-10> rsfs7 <10-> rsfs10}{}
\DeclareMathAlphabet{\mycal}{OT1}{rsfs}{m}{n}

\def\Id{{\rm Id}}

\def\be{\begin{equation}}
\def\ee{\end{equation}}

\def\Id{{\rm Id}}

\newcommand{\R}{\mathbb{R}}


\def\mn{{\mathring{n}}}

\def\be{\begin{equation}}
\def\ee{\end{equation}}
\def\bea#1\eea{\begin{align}#1\end{align}}
\def\non{\nonumber}



\def\softness{0.4}

\def\newsoftness{0.7}
\definecolor{softred}{rgb}{1,\softness,\softness}
\definecolor{softgreen}{rgb}{\softness,1,\newsoftness}
\definecolor{softblue}{rgb}{\softness,\softness,1}

\definecolor{softrg}{rgb}{1,1,\softness}
\definecolor{softrb}{rgb}{1,\softness,1}
\definecolor{softgb}{rgb}{\softness,1,1}

\definecolor{gray}{rgb}{0.5,0.5,0.7}
\definecolor{ddgreen}{rgb}{0,0.4,0.4}
\definecolor{magenta}{rgb}{1,0,1}
\definecolor{dblue}{rgb}{0,0,0.5}
\definecolor{ddcyan}{rgb}{0,0.4,0.9}
\definecolor{dmagenta}{rgb}{0.8,0,0.8}
\definecolor{dyellow}{rgb}{0.8,0.8,0}
\definecolor{gray}{rgb}{0.5,0.5,0.7}
\definecolor{vargreen}{rgb}{0,0.4,0.4}
\definecolor{dgreen}{rgb}{0,0.4,0.2}
\definecolor{dred}{rgb}{.8,0,0}
\definecolor{dmagenta}{rgb}{0.8,0,0.8}
\definecolor{dyellow}{rgb}{0.8,0.8,0}
\definecolor{dbrown}{rgb}{.5,0,0}

\definecolor{gray}{rgb}{0.5,0.5,0.7}
\definecolor{ddgreen}{rgb}{0,0.4,0.4}
\definecolor{dgreen}{rgb}{0,0.65,0}
\definecolor{magenta}{rgb}{1,0,1}
\definecolor{dblue}{rgb}{0,0,0.7}
\definecolor{dcyan}{rgb}{0,0.7,0.7}
\definecolor{dmagenta}{rgb}{0.8,0,0.8}
\definecolor{ddmagenta}{rgb}{0.7,0,0.9}
\definecolor{violet}{rgb}{0.4,0,0.9}
\definecolor{dyellow}{rgb}{0.8,0.8,0}
\definecolor{softblue}{rgb}{\softness,\softness,1}


\newtheorem{teor}{Theorem}[section]
\newtheorem{defin}[teor]{Definition}
\newtheorem{lemm}[teor]{Lemma}
\newtheorem{osse}[teor]{Remark}
\newtheorem{prop}[teor]{Proposition}
\newtheorem{defi}[teor]{Definition}
\newtheorem{coro}[teor]{Corollary}
\newtheorem{prob}[teor]{Problem}
\newtheorem{assu}[teor]{Assumption}

\newcommand{\bele}{\begin{lemm}\begin{sl}}
\newcommand{\enle}{\end{sl}\end{lemm}}
\newcommand{\bedef}{\begin{defi}\begin{sl}}
\newcommand{\eddef}{\end{sl}\end{defi}}
\newcommand{\bete}{\begin{teor}\begin{sl}}
\newcommand{\ente}{\end{sl}\end{teor}}
\newcommand{\beos}{\begin{osse}\begin{rm}}
\newcommand{\eddos}{\end{rm}\end{osse}}
\newcommand{\beas}{\begin{assu}\begin{rm}}
\newcommand{\eddas}{\end{rm}\end{assu}}
\newcommand{\bepr}{\begin{prop}\begin{sl}}
\newcommand{\empr}{\end{sl}\end{prop}}
\newcommand{\bepro}{\begin{prob}\begin{rm}}
\newcommand{\empro}{\end{rm}\end{prob}}
\newcommand{\bede}{\begin{defin}\begin{sl}}
\newcommand{\edde}{\end{sl}\end{defin}}
\newcommand{\beco}{\begin{coro}\begin{sl}}
\newcommand{\enco}{\end{sl}\end{coro}}


\newcommand{\de}{\partial}






\newcommand{\beeq}[1]{\begin{equation}\label{#1}}
\newcommand{\eddeq}{\end{equation}}

\newcommand{\beeqa}[1]{\begin{eqnarray}\label{#1}}
\newcommand{\eddeqa}{\end{eqnarray}}

\newcommand{\beal}[1]{\begin{align}\label{#1}}
\newcommand{\eddal}{\end{align}}

\newcommand{\bespl}[1]{\begin{split}\label{#1}}
\newcommand{\edspl}{\end{split}}

\newcommand{\bega}[1]{\begin{gather}\label{#1}}
\newcommand{\edga}{\end{gather}}

\newcommand{\beeqax}{\begin{eqnarray*}}
\newcommand{\eddeqax}{\end{eqnarray*}}

\def\qed{\ifmmode 
  \else \leavevmode\unskip\penalty9999 \hbox{}\nobreak\hfill
  \fi
  \quad\hbox{\hskip.5em\vrule width.4em height.6em depth.05em\hskip.1em}}
\def\endproofsym{\qed}
\renewenvironment{proof}[1][Proof]{\trivlist\item[\hskip\labelsep{\hskip0pt
    {\normalfont\scshape#1.}\hskip .321429\parindent}]\ignorespaces}
{\endproofsym\endtrivlist}
\def\endnobox{\def\endproofsym{}\end{proof}\def\endproofsym{\qed}}

\newcommand{\no}{\nonumber}

\newcommand{\beeqao}{\begin{eqnarray}\no}
\newcommand{\bealo}{\begin{align}\no}
\newcommand{\besplo}{\begin{split}\no}
\newcommand{\begao}{\begin{gather}\no}



\newcommand{\ov}{\overline}






\newcommand{\io}{\int_{\mathcal{T}^3}}

\newcommand{\oo}{_{\Omega}}

\newcommand{\tor}{\mathcal{T}^3}

\def\R{\mathbb R}



\newcommand{\lhs}{left-hand side}
\newcommand{\rhs}{right-hand side}



\DeclareMathOperator{\dive}{div}
\DeclareMathOperator{\deriv}{d}

\DeclareMathOperator{\sign}{sign}




\let\TeXchi\chi
\def\chi{{\setbox0 \hbox{\mathsurround0pt
$\TeXchi$}\hbox{\raise\dp0 \copy0 }}}



\newcommand{\calM}{{\mathcal M}}

\newcommand{\dit}{\deriv\!t}

\newcommand{\ddt}{\frac{\deriv\!{}}{\dit}}





\newenvironment{giuliorev}{\color{red}}{\color{black}}
\newcommand{\III}{\begin{giuliorev}}
\newcommand{\EEE}{\end{giuliorev}}

\newenvironment{giuliorevb}{\color{magenta}}{\color{black}}
\newcommand{\AAA}{\begin{giuliorevb}}
\newcommand{\BBB}{\end{giuliorevb}}

\newenvironment{giuliorevc}{\color{blue}}{\color{black}}
\newcommand{\CCC}{\begin{giuliorevc}}
\newcommand{\DDD}{\end{giuliorevc}}

\newenvironment{bettirevddg}{\color{ddgreen}}{\color{black}}
\newcommand{\FFF}{\begin{bettirevddg}}
\newcommand{\GGG}{\end{bettirevddg}}

\newenvironment{bettirevdc}{\color{dcyan}}{\color{black}}
\newcommand{\QQQ}{\begin{bettirevdc}}
\newcommand{\KKK}{\end{bettirevdc}}

\newenvironment{bettirevg}{\color{green}}{\color{black}}
\newcommand{\MMM}{\begin{bettirevg}}
\newcommand{\NNN}{\end{bettirevg}}

\newenvironment{rev28}{\color{orange}}{\color{black}}


\numberwithin{equation}{section}

\begin{document}
\title{Nonlinear electrokinetics in nematic electrolytes}

\author{Eduard Feireisl\thanks{Institute of Mathematics of the Academy of Sciences of the Czech Republic, \v Zitn\' a 25, 
    CZ-115 67 Praha 1, Czech Republic {\it (feireisl@math.cas.cz)}}~, Elisabetta Rocca\thanks{Universit\`a degli Studi di Pavia, Dipartimento di Matematica, and IMATI-C.N.R, Via Ferrata 5, 27100,
Pavia, Italy {\it (elisabetta.rocca@unipv.it).}}~,\\ Giulio Schimperna\thanks{Universit\`a degli Studi di Pavia, Dipartimento di Matematica, Via Ferrata 5, 27100, Pavia, Italy
{\it (giusch04@unipv.it)}}~ and Arghir Zarnescu\thanks{IKERBASQUE, Basque Foundation for Science, Maria Diaz de Haro 3,
48013, Bilbao, Bizkaia, Spain}\,  \thanks{BCAM,  Basque  Center  for  Applied  Mathematics,  Mazarredo  14,  E48009  Bilbao,  Bizkaia,  Spain
{\it (azarnescu@bcamath.org)}}\, \thanks{``Simion Stoilow" Institute of the Romanian Academy, 21 Calea Grivi\c{t}ei, 010702 Bucharest, Romania}}

\maketitle

\begin{abstract}

In this article we study a system of nonlinear PDEs modelling the electrokinetics of a nematic electrolyte material consisting of various ions species contained in a nematic liquid crystal. 

The evolution  is described by a system coupling a Nernst-Planck system for the ions concentrations with a Maxwell's equation of electrostatics 
governing the evolution of the electrostatic potential, a Navier-Stokes equation for the velocity field, and 
a non-smooth Allen-Cahn type equation for the nematic director field.

We focus on the two-species case and prove apriori estimates that  provide a weak sequential stability result, the main step towards proving the existence of weak solutions.

\end{abstract}


\section{Introduction}

In this paper we consider a version of the system derived in \cite[(2.51)-(2.55)]{electrolytes} describing
the electrokinetics of a nematic electrolyte that consists of ions that diffuse and advect in a 
nematic liquid crystal environment.

 The system can be written in terms of the following variables:
 \begin{itemize}
 \item the vector $n$  modelling the local orientation of the nematic liquid crystal molecules,
 \item the macroscopic velocity $v$ of the liquid crystal molecules, 
 \item the pressure $p$ resulting from   the incompressibility constraint on the fluid,
 \item the electrostatic potential $\Phi$,
\item the concentrations $c_k$, $k=1,\dots,N$, with valences $z_k\in \{-1,1\}$, of the families of charged ions present in the liquid crystal. 
\end{itemize}
Actually, we consider a modified version of the system in \cite{electrolytes},  assuming certain  simplifications commonly used in the mathematical literature on liquid crystals. More specifically we take equal elastic constants in the Oseen-Frank energy 
and use a Ginzburg-Landau configuration potential $\mcF$ of {\it singular}\/ type
(see below for more details) in order to avoid introducing  the unit length constraint
(cf.~equation $(2.56)$ of \cite{electrolytes}) on $n$ (and thus we can correspondingly drop the 
related Lagrange multiplier term $\lambda n$ in the system in \cite{electrolytes}). Furthermore 
neglecting body forces and inertial effects acting on the director field, we can
write the resulting PDE system as follows:
\begin{align}
  \frac{\partial c_k}{\partial t}+v\cdot\nabla c_k
    & = \frac{1}{k_B\theta} \dive\left(c_k\mathcal{D}_k \nabla\mu_k\right), \quad\textrm{for }k=1,\dots,N, \label{eq:c_k-}\\
  -\dive(\eps_0\eps(n)\nabla\Phi) & = \sum_{k=1}^N qz_kc_k,\label{eq:Phi-}\\
  \frac{\partial v}{\partial t}+(v\cdot\nabla) v+\nabla p 
    & = -K \dive(\nabla n\odot \nabla n) + \dive\sigma,\non\\
  & \mbox{}~~~~~
   +\eps_0\dive\big( (\nabla\Phi\otimes\nabla\Phi)\eps (n)\big), \label{eq:v-}\\
  \dive v & = 0,\label{eq:vincompres-}\\
  \gamma_1(n_t+v\cdot\nabla n-\Omega(v) n)+\gamma_2 D(v)n
   & = K\Delta n+\eps_0\eps_a \left(\nabla\Phi\otimes \nabla \Phi\right)n-\partial\mcF,\label{eq:n-}
\end{align} 
where $\mu_k$ are the  electrochemical potentials of the ions  associated to the various ions species $c_k$, given by
\be
  \mu_k:= k_B\theta(\ln (c_k)+1) + qz_k\Phi,
\ee
$k_B>0$ denotes  the Boltzmann constant, $\theta>0$ stands for the absolute temperature, and $q$ denotes the elementary charge. 

Moreover, we have indicated by 
\be\label{def:DO}
  D(v):=\frac{1}{2}(\nabla v+\nabla v^t) \quad\text{and }\,\Omega(v):=\frac{1}{2}(\nabla v-\nabla v^t)
\ee
the symmetric and antisymmetric parts of the velocity gradient.
The diffusion operator in \eqref{eq:Phi-} is ruled by the 
matrix
\be
  \eps(n):=\eps_\perp \Id + \eps_a n\otimes n,
\ee
with constants $\eps_\perp>0$ and $\eps_a\ge 0$, $\Id$ denoting the identity matrix.  Here $\eps_a=\eps_{||}-\eps_\perp$, where $\eps_{||}$ and $\eps_\perp $ denote the electric permittivity when the electric field ${\bf E}=\nabla \Phi$ is parallel, respectively, perpendicular to $n$.

The constant $\eps_0>0$ stands for the vacuum
dielectric permeability. The matrices ${\mathcal D}_k$ are positive 
definite, i.e., 
\be\label{ass:Dkpossdef}
({\mathcal D}_k \xi) \cdot \xi > \alpha |\xi|^2
\ee for some $\alpha>0$
and all $k=1,\dots,N$ and $\xi\in \RR^3$.
In the above we have denoted by $\nabla n\odot \nabla n$ the $3 \times 3$ matrix whose $(i,j)$-component is 
$n_{k,i}n_{k,j}$ (here and in the sequel we assume summation over repeated indices). As customary,
for $a,b\in\R^3$ we denote as $a\otimes b$ the $3\times3$ matrix with component $(i,j)$ given by $a_i b_j$. 
 We will further assume that the system is non-dimensionalized, so the constants are dimensionless (this can be achieved similarly as in Section $3.2$ in  \cite{electrolytes}).

The Nernst-Planck type equations \eqref{eq:c_k-} correspond  to the  continuity equation for ions
with the electric potential $\Phi$ satisfying the Maxwell's equation of electrostatics \eqref{eq:Phi-}.

The Navier-Stokes equations \eqref{eq:v-}, with the incompressibility constraint \eqref{eq:vincompres-},
rule the evolution of the liquid crystal flow. Note the Korteweg forces on the \rhs\ being 
induced by the the director field $n$  and the effects of the electric field, respectively. 
As in \cite{leslie92}, we assume for the total stress tensor the following general expression:
\be\label{def:sigma}
  \sigma=\alpha_1(D(v)n\cdot n)n\otimes n+\alpha_2 \mn\otimes n+\alpha_3 n\otimes\mn+\alpha_4 D(v)+\alpha_5 D(v)n\otimes n+\alpha_6 n\otimes D(v)n ,
\ee 
where we have denoted $\mn:= \partial_t n+v\cdot\nabla n-\Omega(v) n$ the Lie derivative of $n$. Here the term $\alpha_4 D(v)$ represents the classical Newtonian stress tensor, while the other terms represent the additional stress produced by the interaction of the anisotropic liquid crystal molecules, see \cite{ericksenfirst,ericksenpos}.

As mentioned above, we avoid to insert the unit length constraint in \eqref{eq:n-} and instead require $|n|\le 1$, in the spirit of the the variable length model proposed by J. L. Ericksen in \cite{ericksenvariable}. Indeed,
following an approach commonly used in the context of phase-transition models, we 
enforce the property $|n|\le 1$ by means of the {\it singular potential}\/ $\mcF$.
Namely, we assume $\mcF:\RR^3 \to [0,+\infty]$ be a convex and lower semicontinuous
function whose {\it effective domain}\/ (i.e., the set where it attains finite values)
is assumed to coincide with the closed unit ball $\ov B_1$ of $\RR^3$, with a reference 
choice being given by 
\be\label{singpot}
  \mcF(n) = \frac12 F(|n|^2),
\ee 
where $F$ is convex and has the interval $(-\infty,1]$ as an effective domain. We will actually choose $F(r) = (1-r) \log (1-r)$, 
an expression mutuated from the Cahn-Hilliard logarithmic potential, but we point out that more general choices may be allowed.

Such an idea was introduced by J.L. Ericksen in \cite{ericksenvariable} in order to enforce the physicality of a scalar order parameter and has already been applied to liquid crystal models 
in a number of papers  (cf., e.g., ~\cite{FRSZ1,FRSZ2}) 
and  has the advantage that  as soon as we have proved existence of  a solution,  then 
the constraint $|n|\leq 1$ is authomatically satisfied. This helps in the estimates which 
actually could not be performed in this way in the case of a {\sl classical} double-well potential. 

\smallskip

Finally, in order to avoid complications due to the interaction with the boundary, we will settle the above system
on the flat 3-dimensional torus
\begin{equation} \label{i5}
  \tor = \left( [-\pi, \pi]|_{\{ - \pi, \pi \}} \right)^3
\end{equation}
so assuming periodic boundary conditions. We note that more realistic choices for the boundary conditions could 
be likely taken. Nevertheless the above setting, beyond being the simplest one mathematically, is also consistent
with the basic physical principles of conservation of charge and of momentum 
(indeed, we assume no external forces be present), that can be verified respectively by integrating \eqref{eq:c_k-} and
\eqref{eq:v-} with respect to space variables.

\smallskip

Our main aim here is to set the ground for proving the existence of weak solutions. These are usually obtained via  three steps: {\it `apriori estimates'}, {\it `approximation scheme'}, and {\it `compactness'}. 

 The apriori estimates are obtained on  {\it presumptive} smooth solutions of the equation. Such   estimates  allow to control 
 (in terms of initial data and fixed parameters of the system)  certain norms, sufficiently strong, in order to allow to pass to the limit in the approximation scheme. 
 
 The approximation scheme is usually designed such that one can obtain estimates for the approximating equations  that are usually very close to the apriori estimates. 
 The construction of such a scheme can be a highly tedious and non-trivial issue in presence of complex systems as we consider (see for comparison 
 our previous works on non-isothermal liquid crystals, with an approximation scheme \cite{FRSZ1} and without one, just with apriori estimates as in here \cite{FRSZ2}). Thus
 we will leave the construction of such a scheme to interested readers and focus just on the first part, namely obtaining apriori estimates that are strong enough in order 
 to allow to pass to the limit in the approximation scheme  via compactness and we will refer to this as {`weak sequential stability'}, the main content of Theorem~\ref{thm:stab}.

In addition to that, we will focus on a simplified version of 
system \eqref{eq:c_k-}-\eqref{eq:n-}, complemented with the Cauchy conditions and with periodic
boundary conditions in three dimensions of space and with no restrictions on the magnitude 
of the initial data. The precise simplifications will be introduced in the next section, but
it is worth observing that, beyond setting some physical constants equal to one, the only effective reduction we are actually going to operate concerns the number of species $c_k$ which 
will be assumed to be equal to $2$. Namely, we only take two species $c_p$ and $c_m$, which will
then denote the density of positive and negative charges, respectively. Mathematically
speaking, this ansatz simplifies the nature of the system \eqref{eq:c_k-}-\eqref{eq:Phi-},
and in particular permits us to prove by means of very simple maximum principle arguments 
the uniform boundedness of $c_p$ and $c_m$, which is a key ingredient for obtaining
the apriori estimates. 

It is worth noting that we expect the same boundedness property to hold also in
the general case of $N$-species, however the proof may be much more involved 
and require use of more technical results about invariant regions for evolutionary systems
(see, e.g., \cite{CF}). We also expect that similar arguments could be applied  in the more complicated systems where 
one uses a tensorial order parameter,  that is a matrix valued function, i.e.~a $Q$-tensor in the LC terminology, instead of the vector-valued one, $n$,
as done for instance in \cite{Qelectrolytes}. The current work is related to work done in certain simpler systems 
that can be regarded as subsets of our equations, such as Nernst-Planck-Navier-Stokes  system (see for instance \cite{ConstantinNP}
 and the references therein) and liquid crystal equations (see for instance the review \cite{LinWangreview}).

\smallskip

The main ingredients of the proofs are the following: first we perform an energy estimate which is 
mainly based on a key Lemma (cf.~Lemma~\ref{lemma1}) providing sufficient conditions 
on the $\alpha_i$-coefficients such that the dissipation is non-negative. Then, via a maximum-principle technique, we prove 
pointwise bounds for $c_p$ and $c_m$. The $L^\infty$-estimate on the potential $\Phi$ follows instead by a Moser-iteration scheme 
proved in Lemma~\ref{lem:phi}, while in Lemma~\ref{lem:meyer} we state an $L^p$-regularity result for $n$. This result, based on an $L^p$-estimate for the potential $\partial \mcF$, 
is in general new in the framework of 
non-smooth parabolic systems, while it is quite known in case of scalar equations (cf., e.g., \cite{cgmr16}). Finally, an additional regularity result for $n$ (cf.~Lemma~\ref{lem:n}) is shown
in case the anisotropy coefficient $\eps$ is sufficiently small. In the last Section~\ref{sec:limit} the weak sequential stability property result is proved for every $\eps>0$.

The plan of the paper is as follows: in the next section~\ref{sec:main} we introduce the simplified 
version of system \eqref{eq:c_k-}-\eqref{eq:n-} and state the precise formulation of our existence
theorem. Then, the basic apriori estimates are derived in Section~\ref{sec:apriori}.

Finally, in Section~\ref{sec:limit} we will prove the stability result.


\section{Main results}
\label{sec:main}

We start introducing some notation. Given a space of functions defined over $\Omega=\tor$,
we will always use the same notation for scalar-, vector-, or tensor-valued function. 
For instance, we will  indicate by the same letter $H$ the spaces 
$L^2(\Omega)$, $L^2(\Omega)^3$ and $L^2(\Omega)^{3\times 3}$. Correspondingly,
the norm in $H$ will be simply denoted  by $\| \cdot \|$. The notation
actually subsumes the periodic boundary conditions. We also set $V=H^1(\Omega)$
(or $H^1(\Omega)^3$, or $H^1(\Omega)^{3\times 3}$). For two $3\times3$ matrices $A$, $B$, we also set $A:B:=A_{ij}B_{ij}$.

\bigskip

In view of the discussion carried out above, we now introduce the simplified system for which we shall
prove existence of weak solutions. Namely, we assume $\eps_\perp=k_B\theta=K=\eps_0=q=\gamma_1=\gamma_2=1$ 
and write $\eps$ in place of $\eps_a$. Moreover, we only take two species $c_p$ and $c_m$ with $z_p=1$ and $z_m=-1$. Moreover we take, similarly in spirit as in \cite{electrolytes}, Section $3.1$, the matrices $\mathcal{D}_p=\mathcal{D}_k={\rm Id}+\eps n\otimes n$.\footnote{ This simplification is not necessary for obtaining the energy law in Proposition~\ref{prop:energylaw}, but essential in deriving the maximum principle in Proposition~\ref{prop:maxprinc}  }
Then the simplified system takes the form
\bea
  \frac{\partial c_p}{\partial t}+v\cdot\nabla c_p & = \dive \big((\Id+\eps n\otimes n) (\nabla c_p+c_p\nabla \Phi)\big),\label{eq:cp}\\
  \frac{\partial c_m}{\partial t}+v\cdot\nabla c_m & = \dive \big ((\Id+\eps n\otimes n) (\nabla c_m-c_m\nabla \Phi)\big),\label{eq:cm}\\
  -\dive \big((\Id+\eps n\otimes n) \nabla\Phi\big) & = c_p-c_m,\label{eq:Phi}
\eea
\bea
  \frac{\partial v}{\partial t}+(v\cdot\nabla) v+\nabla p & = \alpha_4  \dive D(v)  -\dive(\nabla n\odot \nabla n)\non\\
  & \mbox{}~~~ + \dive \big( (\nabla\Phi\otimes\nabla\Phi) (\Id+\eps n\otimes n) \big)\non\\
  & \mbox{}~~~ + \dive\big(\alpha_1 (D(v)n\cdot n)n\otimes n+\alpha_2 \mn\otimes n+\alpha_3 n\otimes\mn\big)\non\\
  & \mbox{}~~~ + \dive\big(\alpha_5 D(v)n\otimes n+\alpha_6 n\otimes D(v)n\big),\label{eq:v}\\
  \dive v& = 0,\label{eq:vincompres}\\
  %
  n_t+v\cdot\nabla n-\Omega(v) n
  + D(v)n  
   & = \Delta n+\eps\left(\nabla\Phi\otimes \nabla \Phi\right)n-\partial\mcF(n).\label{eq:n}
\eea
Note that $\partial\mcF$ denotes the {\it subdifferential}\/ of $\mcF$ in the sense of convex
analysis. Although one can use more general assumptions on the potential here we are assuming for definiteness that
\be\label{hp:mcF}
  \mcF(n):=\left\{\begin{array}{ll}\frac12 F(|n|^2)-F_*,& \textrm{ if }|n|\leq1\\
  +\infty, \textrm{ otherwise }\end{array} \right.
\ee where 

\be\label{hp:F}
F(r):=(1-r)\log (1-r)-F_*, \,\, r\in (0,1),
\ee
and $F_*$ is chosen such that $\min \,F(r)=F(1-1/e)=0$.

Moreover, in order to prove the energy estimate (cf.~Lemma~\ref{lemma1}), let us suppose  that there exists $\delta>0$ such that 

\be\label{hp:alpha}
  \alpha_4>0, \quad  \alpha_4-|\alpha_1|-|\alpha_5|-|\alpha_6|-\frac{1}{1-\delta} > 0.
  \ee
Finally, we assume the initial data to satisfy the following conditions,
where $\bar c>0$ is a given constant:
\bea\label{hp:init}
  & c_{p,0},c_{m,0} \in L^\infty(\tor), \quad
   0 \le c_{p,0},c_{m,0} \le {\bar c}~\,\text{a.e.~in }\,\tor,\\
 \label{hp:v0}
  & v_0 \in L^2(\tor), \quad
   \dive v_0 = 0,\\
 \label{hp:n0}
  & n_0 \in H^1(\tor), \quad |n_0(x)|\le 1, \forall x\in \tor,
\eea

\bigskip
Let us now define the weak solutions, in a rather standard way, but emphasizing the spaces of functions used.
\begin{definition}\label{def:weak} [Weak solutions]
Assume hypotheses \eqref{hp:F}, \eqref{hp:init}--\eqref{hp:n0}. Then, the functions
 \bea\label{re:v}
 & v \in L^\infty(0,T;H) \cap L^2(0,T;V),\\
 \label{re:n}
  & n\in  W^{1,p_0}(0,T; L^{p_0}(\tor))\cap L^{p_0}(0,T; W^{2,p_0}(\tor))\cap L^\infty(0,T;V) \cap L^\infty((0,T)\times\tor),\\
  \label{re:f}
    &  \mcF (n)\in L^{p_0}(0,T;L^{p_0}(\tor))\quad\hbox{for some }p_0>1,\\
      \label{re:phi}
  & \Phi \in L^\infty(0,T;V) \cap L^\infty(0,T;L^\infty(\tor))\cap  L^\infty(0, T; W^{1,p_M}(\tor))\quad\hbox{for some }p_M>2, \\
 \label{re:c}
  & c_p,~c_m \in W^{1, 4/3}(0,T; V') \cap L^2(0,T;V) \cap L^\infty(0,T;L^\infty(\tor)),\\
  \label{re:c2}
  & c_p, \, c_m\geq 0 \text{ a.e.~in } \tor\times (0,T),
 \eea
are a weak solution of \eqref{eq:cp}--\eqref{eq:vincompres} provided that
 \bea
 & \int_0^T\left(<\frac{\partial c_p}{\partial t}, \phi_p>{-\int_\Omega vc_p\cdot\nabla \phi_p} \right) = \int_0^T\int_\Omega \big((\Id+\eps n\otimes n) (\nabla c_p+c_p\nabla \Phi)\big)\nabla\phi_p,\label{eq:cpw}\\
 &  \int_0^T\left(<\frac{\partial c_m}{\partial t}, \phi_m>{-\int_\Omega vc_m\cdot\nabla \phi_m\,dx} \right)  = \int_0^T\int_\Omega \big((\Id+\eps n\otimes n) ({\nabla c_m-c_m\nabla \Phi)}\big)\nabla\phi_m,\label{eq:cmw}\\
& \int_0^T\int_\Omega  \big((\Id+\eps n\otimes n) \nabla\Phi\big):\nabla u =\int_0^T\int_\Omega(c_p-c_m) u,\label{eq:Phiw}\\  
& \int_0^T\int_\Omega v\frac{\partial z}{\partial t}+{(v\otimes v)}:\nabla z    = -\int_\Omega v_0 z(0)\,dx+\int_0^T\int_\Omega\sigma:\nabla z-(\nabla n\odot \nabla n):\nabla z\non \\
  & \qquad\qquad  \qquad\qquad \qquad\qquad\qquad+ \int_0^T\int_\Omega\big( (\nabla\Phi\otimes\nabla\Phi) (\Id+\eps n\otimes n) \big):\nabla z\label{eq:vw}\\
 %
  %
 & n_t+v\cdot\nabla n-\Omega(v) n
  + D(v)n  
   = \Delta n+\eps\left(\nabla\Phi\otimes \nabla \Phi\right)n-\partial\mcF(n)\quad \hbox{a.e.~in } \tor\times (0,T),\label{eq:nw}
\eea 
with $\sigma$, $D(v)$, and $\Omega(v)$ defined as in \eqref{def:sigma} and \eqref{def:DO}, and holding true for every test functions $\phi_p,\,\phi_m\in L^4(0,T; V)$, $u\in L^2(0,T;V)$,
$z\in C^\infty(\tor\times [0,T])$, {$\dive z=0$}   and coupled with the initial conditions:
\be\label{init}
  c_p(0) = c_{p,0},\,\,
  c_m(0) = c_{m,0},  \quad \hbox{in }V',\qquad
  n(0) = n_{0}, \,\,
  v(0)=v_0,\quad\hbox{a.e.~in }\tor.
\ee

\end{definition}

The {\it weak sequential stability} theorem we aim to prove is the following:

\begin{theorem}\label{thm:stab}
Let us assume that there exists a family $(c_{p}^{(k)},c_{m}^{(k)},\Phi^{(k)},v^{(k)},n^{(k)})_{k\in\mathbb{N}}$ of smooth solutions of the system  
\eqref{eq:cp}--\eqref{eq:vincompres} on the flat 3-dimensional torus $\tor$  subject to corresponding initial data
\be\label{init:a}
  c_{p}^{(k)}(0) = c_{p,0}^{(k)}, \quad
  c_{m}^{(k)}(0) = c_{m,0}^{(k)}, \quad
  n^{(k)} (0)= n_{0}^{(k)},
\ee with $(c_{p,0}^{(k)},c_{m,0}^{(k)},n_{0}^{(k)})\in (C^\infty(\tor))^3$.
We furthermore assume that the conditions \eqref{hp:mcF},\eqref{hp:F}, \eqref{hp:init}--\eqref{hp:n0}, \eqref{ass:Dkpossdef} hold.
Moreover  we assume that there exists a constant $\tilde C$, independent of $k\in\mathbb{N}$, such that
\be
\|c_{p,0}^{(k)}\|_{L^\infty},\|c_{m,0}^{(k)}\|_{L^\infty},\|n_{0}^{(k)}\|_{H^1(\tor)}, \|v_0\|_H\le \tilde C\quad\hbox{and } c_{i,0}^k\to c_{i,0}, \quad n_0^k\to n_0,
\ee
the latter convergence relations holding, e.g., in the sense of distributions. 

\medskip
 Then there exists a (non-relabelled) sequence of the family $(c_{p}^{(k)},c_{m}^{(k)},\Phi^{(k)},v^{(k)},n^{(k)})$
tending, in the sense explicated in relations \eqref{li:11}--\eqref{li:17} below, to a quintuple $(c_{p},c_{m},\Phi,v,n)$ solving
system~\eqref{eq:cp}-\eqref{eq:n} in the sense specified in Definition~\ref{def:weak}.
\end{theorem}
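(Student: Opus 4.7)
The plan is to combine the a priori estimates already announced in the excerpt (the energy identity of Lemma~\ref{lemma1}, the $L^\infty$ maximum principle for $c_p,c_m$ of Proposition~\ref{prop:maxprinc}, the Moser-iteration bound on $\Phi$ of Lemma~\ref{lem:phi}, and the Meyer-type $L^{p_0}$ regularity of Lemma~\ref{lem:meyer}) with a standard compactness-and-passage-to-the-limit scheme in the spirit of \cite{FRSZ2}. First I would rewrite every equation \eqref{eq:cp}--\eqref{eq:n} in its weak formulation for the smooth approximating sequence $(c_p^{(k)},c_m^{(k)},\Phi^{(k)},v^{(k)},n^{(k)})$ and collect, uniformly in $k$, the bounds corresponding to the regularity classes \eqref{re:v}--\eqref{re:c2}. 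From these bounds I would extract (by a diagonal extraction) a non-relabelled subsequence converging weakly-$\ast$ in the natural dual spaces to some $(c_p,c_m,\Phi,v,n)$, together with the convergence $\partial\mcF(n^{(k)})\rightharpoonup \xi$ weakly in $L^{p_0}((0,T)\times\tor)$ for some limit $\xi$.

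Next I would promote these weak convergences to the strong/a.e.\ convergences needed to identify the nonlinear terms. For $v$ and $n$ I would apply Aubin--Lions--Simon: the equation \eqref{eq:v} yields a uniform bound on $\partial_t v^{(k)}$ in some negative Sobolev-Bochner space (once each term on the right is tested against divergence-free test functions, using the $L^\infty$ bounds on $c_p,c_m,\Phi, n$ and $L^2$ bound on $\nabla\Phi,\nabla n, Dv$), giving $v^{(k)}\to v$ strongly in $L^2(0,T;H)$; similarly \eqref{re:n} together with the equation \eqref{eq:n} gives strong convergence $n^{(k)}\to n$ in $L^2(0,T;V)$ and a.e., so that $\nabla n^{(k)}\odot\nabla n^{(k)}\to \nabla n\odot\nabla n$ in $L^1$. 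For $\Phi^{(k)}$, the elliptic equation \eqref{eq:Phi} combined with the $L^\infty$ bounds on $c_p^{(k)}-c_m^{(k)}$ gives strong compactness of $\nabla\Phi^{(k)}$ in $L^2(0,T;H)$ (indeed subtracting the equations for $\Phi^{(k)}$ and $\Phi^{(\ell)}$ and testing by the difference, the uniform ellipticity of $\Id+\eps n\otimes n$ together with the already-known strong convergence of $c_p^{(k)}-c_m^{(k)}$ in $L^2(0,T;V')$ yields Cauchy-ness).

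With these strong convergences in hand, the passage to the limit in the quadratic convective and Korteweg-type terms $(v\otimes v)$, $\nabla n\odot\nabla n$, $(\nabla\Phi\otimes\nabla\Phi)(\Id+\eps n\otimes n)$, as well as in the Leslie stress contributions $(D(v)n\cdot n)n\otimes n$, $\mn\otimes n$, etc., becomes routine (the velocity gradient only appears linearly in the stress, multiplied by $n$ which converges strongly and is bounded by $1$). For the Nernst--Planck equations the only nontrivial product is $c_p^{(k)}\nabla\Phi^{(k)}$ (and analogously for $c_m$): weak convergence of $\nabla\Phi^{(k)}$ together with strong convergence of $c_p^{(k)}$ in $L^2((0,T)\times\tor)$ (obtained via Aubin--Lions from the equation \eqref{eq:cp}, noting $\partial_t c_p^{(k)}$ is bounded in $L^{4/3}(0,T;V')$) suffices. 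The sign conditions $c_p,c_m\geq 0$ and the bound $|n|\le 1$ are preserved in the limit by a.e.\ convergence.

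The main obstacle, as usual in Ericksen-type models with a singular potential, will be the identification of the limit of the maximal monotone term $\partial\mcF(n^{(k)})\rightharpoonup \xi$ with $\partial\mcF(n)$. For this I would exploit the strong convergence $n^{(k)}\to n$ in $L^2(0,T;H)$ combined with a Minty-type argument: using the monotonicity inequality
\[
\int_0^T\!\!\int_\tor \bigl(\partial\mcF(n^{(k)})-\partial\mcF(w)\bigr)\cdot\bigl(n^{(k)}-w\bigr)\,dx\,dt\ge 0
\]
for arbitrary $w\in L^{p_0}(0,T;L^{p_0}(\tor))$ with $\partial\mcF(w)\in L^{p_0'}$, passing to the limit using strong/weak convergence, and then varying $w$, I would conclude $\xi\in\partial\mcF(n)$ a.e. To make this rigorous one needs a ``limsup'' inequality for $\int\partial\mcF(n^{(k)})\cdot n^{(k)}$, which I would obtain by testing \eqref{eq:nw} by $n^{(k)}$, using the energy-type cancellation of the terms $\Omega(v)n\cdot n$ and the strong convergence of the remaining terms. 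This, together with a standard density argument for the test functions in \eqref{eq:cpw}--\eqref{eq:vw} and verification of the initial conditions via the $W^{1,p}(0,T;\cdot)$ embeddings encoded in \eqref{re:n} and \eqref{re:c}, completes the proof of convergence to a weak solution in the sense of Definition~\ref{def:weak}.
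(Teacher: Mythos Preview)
Your proposal is correct and follows essentially the same strategy as the paper: derive the uniform bounds from the energy law, maximum principle, Moser iteration and the Meyers/$L^{p_0}$ regularity lemma, then use Aubin--Lions to upgrade to strong convergence of $c_p,c_m,n$ (and $v$) and pass to the limit in the nonlinear products. Two minor differences worth noting: the paper identifies the limit of $\partial\mcF(n^{(k)})$ directly from strong $L^q$-convergence of $n^{(k)}$ together with the demiclosedness of maximal monotone graphs (citing \cite[Prop.~1.1]{barbu}), so your Minty/limsup machinery, while correct, is more than is needed once you already have $n^{(k)}\to n$ strongly; and for $\nabla n\odot\nabla n$ the paper does not claim full $L^2(0,T;V)$ strong convergence of $n$ but instead interpolates via Gagliardo--Nirenberg between $L^\infty$ and $W^{2,p_0}$ to obtain an $L^s$ bound with $s>1$, which together with Aubin--Lions compactness in $L^{p_0}(0,T;W^{1,p_0})$ yields the same conclusion. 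Your Cauchy argument for $\nabla\Phi^{(k)}$ is a clean addition that the paper leaves implicit.
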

\begin{remark}
In fact one would need solutions which are not smooth but just `sufficiently regular', but the precise minimal regularity 
needed is not of interest since in general the solutions obtained through approximations scheme are smooth.
\end{remark}

The rest of the paper is devoted to the proof of Theorem~\ref{thm:stab}.


\section{Apriori estimates}
\label{sec:apriori}

We now prove a number of apriori estimates on the solutions of system~\eqref{eq:cp}-\eqref{eq:n}. As
noted above, we decided to perform the computations by directly working on the ``limit'' equations
without referring to any explicit regularization or approximation scheme. Of course, in such
a setting, the procedure has just a formal character because the use of some test function
as well as some integration by parts is not justified (this, for instance, surely happens in
connection with the Navier-Stokes system~\eqref{eq:v}). On the other hand, the computations
we are going to develop are not trivial and involve a certain number of subtlenesses; for
this reason we believe that presenting them in the simplest possible setting might
help comprehension. Actually, in the last part of the paper we will provide some 
hints about the construction of an approximation
scheme being compatible with the estimates. 

The first property we prove is the basic energy estimate resulting as a consequence of the variational
nature of the model. We state it in the form of a 
\begin{proposition}[Energy law]\label{prop:energylaw}
 Let $(c_m,c_p,\Phi,v,n): \Omega\to \RR\times\RR\times\RR\times \RR^3\times \RR^3$ be a sufficiently 
 smooth solution of system~\eqref{eq:cp}-\eqref{eq:n} on $\tor\times (0,T)$ complemented with 
 the initial conditions~\eqref{init} and satisfying the coefficient relations \eqref{hp:alpha}(that ensure the non-negativity  of the dissipation).
 Then there holds the energy inequality
 \bea
   & E(t) + \int_0^t \io \bigg( \frac{1}{c_p} |\nabla c_p+c_p\nabla\Phi|^2 +  \frac{1}{c_m} |\nabla c_m-c_m\nabla\Phi|^2 \bigg)\\
   & \mbox{}~~~~~~~~~ +\int_0^t\io \big(  \underbrace{\alpha_4|D(v)|^2
   +\alpha_1(n\cdot D(v)n)^2+2 (\mathring{n}\cdot D(v)n)+(\alpha_5+\alpha_6)|D(v)n|^2+|\mathring{n}|^2}_{\ge 0 } \big)\non\\
 \label{est:en} 
  & \mbox{}~~~~ \le E(0)
\eea 
where the energy functional is defined as 
\be\label{defi:en} 
  E(t)=\io \Big( \frac{1}{2}|v|^2+\frac{1}{2}|\nabla n|^2+\mcF(n)+c_p\ln c_p+c_m\ln c_m+\frac{1}{2}(1+\eps n\otimes n)\nabla \Phi\cdot\nabla\Phi \Big).
\ee
\end{proposition}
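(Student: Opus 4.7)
The plan is to derive the identity by testing each equation with the natural ``entropy-conjugate'' variable and showing that the cross terms cancel. Concretely, I would multiply \eqref{eq:cp} by the electrochemical potential $\mu_p=\ln c_p+\Phi$ and \eqref{eq:cm} by $\mu_m=\ln c_m-\Phi$, test \eqref{eq:v} against $v$ itself, and test \eqref{eq:n} against $\mathring n=n_t+v\cdot\nabla n-\Omega(v)n$. Since $\mathcal F(n)=\tfrac12 F(|n|^2)$, we have $\partial\mathcal F(n)\parallel n$, so $\partial\mathcal F(n)\cdot\Omega(v)n=0$ and $\int\partial\mathcal F(n)\cdot(v\cdot\nabla n)=\int v\cdot\nabla\mathcal F(n)=0$ by incompressibility, giving $\int\partial\mathcal F(n)\cdot\mathring n=\tfrac{d}{dt}\int\mathcal F(n)$. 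For the $c_p,c_m$ equations, the convection terms $\int v\cdot\nabla c_k\ln c_k$ vanish, so testing produces $\tfrac{d}{dt}\int(c_p\ln c_p+c_m\ln c_m)$ plus mixed terms $\int\partial_t(c_p-c_m)\Phi-\int(c_p-c_m)(v\cdot\nabla\Phi)$ and the non-positive dissipation $-\int\frac{1}{c_p}(\mathrm{Id}+\eps n\otimes n)|\nabla c_p+c_p\nabla\Phi|^2$ and the analogous $c_m$ term.

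The heart of the proof is then to show that all cross terms cancel. First, differentiating \eqref{eq:Phi} in time and integrating by parts yields the crucial identity
\[
\int\partial_t(c_p-c_m)\Phi=\frac{d}{dt}\int\tfrac12(\mathrm{Id}+\eps n\otimes n)\nabla\Phi\cdot\nabla\Phi+\eps\int(\nabla\Phi\otimes\nabla\Phi)n\cdot n_t,
\]
which absorbs one electrostatic piece into $\tfrac{d}{dt}E$. Second, using Poisson's equation again, $-\int(c_p-c_m)(v\cdot\nabla\Phi)=\int(\mathrm{Id}+\eps n\otimes n)\nabla\Phi\cdot\nabla(v\cdot\nabla\Phi)$; expanding the right-hand side and integrating by parts (exploiting $\operatorname{div} v=0$) rewrites this quantity as $\int(\nabla\Phi\otimes\nabla\Phi)(\mathrm{Id}+\eps n\otimes n){:}\nabla v$ plus a correction $-\eps\int(\nabla\Phi\otimes\nabla\Phi)n\cdot(v\cdot\nabla n)$. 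The first piece exactly cancels the Maxwell stress contribution produced by testing \eqref{eq:v} with $v$. Similarly, testing \eqref{eq:n} with $\mathring n$ and using the identities $\int\Delta n\cdot n_t=-\tfrac{d}{dt}\tfrac12\int|\nabla n|^2$, $\int\Delta n\cdot(v\cdot\nabla n)=-\int(\nabla n\odot\nabla n){:}\nabla v$, and $\int\Delta n\cdot\Omega(v)n=0$ (the last by two successive integrations by parts, combined with the antisymmetry of $\Omega$) cancels the Ericksen stress contribution $\int(\nabla n\odot\nabla n){:}\nabla v$ from \eqref{eq:v}.

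The subtlest cancellation concerns the remaining electrostatic-nematic cross terms. After the previous cancellations one is left with $\eps\int(\nabla\Phi\otimes\nabla\Phi)n\cdot[\mathring n-n_t-v\cdot\nabla n]=-\eps\int(\nabla\Phi\otimes\nabla\Phi)n\cdot\Omega(v)n$, which must be eliminated by the Leslie stress contributions of $\alpha_2,\alpha_3,\alpha_5,\alpha_6$. Expanding these contractions produces $(\alpha_3-\alpha_2)\int\mathring n\cdot\Omega(v)n+(\alpha_6-\alpha_5)\int D(v)n\cdot\Omega(v)n$; using the hypotheses implicit in the simplified system ($\gamma_1=\gamma_2=1$, together with Parodi's relation so that these coefficients combine correctly) this regroups exactly into $\int(\mathring n+D(v)n)\cdot\Omega(v)n$, and substituting \eqref{eq:n} together with $\int\Delta n\cdot\Omega(v)n=0$ and $\partial\mathcal F(n)\cdot\Omega(v)n=0$ turns this into $\eps\int(\nabla\Phi\otimes\nabla\Phi)n\cdot\Omega(v)n$, cancelling the leftover. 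I expect this step to be the main technical obstacle: keeping track of the signs and matching the antisymmetric $\Omega$-terms requires some care.

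Collecting everything, the remaining terms on the dissipation side are precisely $\alpha_4|D(v)|^2+\alpha_1(n\cdot D(v)n)^2+2\mathring n\cdot D(v)n+(\alpha_5+\alpha_6)|D(v)n|^2+|\mathring n|^2$ plus the non-negative $c_p,c_m$ dissipations. Non-negativity of the bracketed quadratic form is the content of Lemma~\ref{lemma1}, whose proof is a Young-inequality argument: the cross term $2\mathring n\cdot D(v)n$ is absorbed via $|2\mathring n\cdot D(v)n|\le \delta^{-1}|D(v)n|^2+\delta|\mathring n|^2$ (with a suitable normalization), and the remaining $|D(v)n|^2$ and $(n\cdot D(v)n)^2$ terms are dominated by $|D(v)|^2$ using $|n|\le 1$, which requires precisely the spectral condition \eqref{hp:alpha}. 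Integrating in time from $0$ to $t$ then yields \eqref{est:en}.
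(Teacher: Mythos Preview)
Your overall strategy matches the paper's: test the ion equations by $\ln c_{p/m}\pm\Phi$, the velocity equation by $v$, and the director equation by a time derivative of $n$, then use Poisson's equation (both differentiated in time and combined with the convective term $v\cdot\nabla\Phi$) to close the electrostatic contributions. Your identification of the delicate cancellation involving $\int(\mathrm{Id}+\eps\, n\otimes n)\nabla\Phi\cdot\nabla(v\cdot\nabla\Phi)$ is essentially the paper's ``$\mathcal A_5=\mathcal B_{51}+\mathcal B_{52}$'' step.

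There is, however, one genuine error. The identity $\int_{\mathcal T^3}\Delta n\cdot\Omega(v)n\,dx=0$ that you invoke is \emph{false} in general: one integration by parts gives
\[
\int\Delta n_i\,\Omega_{ij}\,n_j
= -\int\partial_k n_i\,\Omega_{ij}\,\partial_k n_j
  -\int\partial_k n_i\,(\partial_k\Omega_{ij})\,n_j,
\]
and while the first integral vanishes by the antisymmetry of $\Omega$, the second does not (take e.g.\ $n=(\cos\theta,\sin\theta,0)$ with $\theta$ and $\Omega_{12}$ nontrivial functions of $x_1$). You use this claimed identity twice---once when evaluating $\int\Delta n\cdot\mathring n$, and once after substituting \eqref{eq:n} into the Leslie--stress term $\int(\mathring n+D(v)n)\cdot\Omega(v)n$. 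Fortunately the two occurrences carry opposite signs, so if you simply keep the term rather than discarding it the argument survives; but as written the step is incorrect.

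The paper sidesteps this altogether by testing \eqref{eq:n} with the \emph{material} derivative $\dot n=n_t+v\cdot\nabla n$ rather than the corotational derivative $\mathring n=\dot n-\Omega(v)n$. With that choice $\int\partial\mathcal F(n)\cdot\dot n=\frac{d}{dt}\int\mathcal F(n)$ directly, no $\int\Delta n\cdot\Omega n$ term ever arises, and the quantity $\int(\mathring n+D(v)n)\cdot\dot n$ combines with the Leslie contribution $\int\sigma':\nabla v$ in a single step to produce the stated dissipation (using, as you correctly observe, the implicit relations $\alpha_3-\alpha_2=\alpha_2+\alpha_3=\alpha_6-\alpha_5=1$ coming from $\gamma_1=\gamma_2=1$). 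Your route works once the bookkeeping is fixed, but the paper's is shorter and avoids the circular substitution.
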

\begin{proof}
We multiply the equation \eqref{eq:cp} by $\ln c_p+\Phi$, integrate by parts using periodic boundary conditions to obtain
\bea\non
  & \ddt \io c_p(\ln c_p-1)
  + \io c_p'\Phi
  + \io (v\cdot\nabla c_p)\Phi\\
 \label{cp:11} 
  & \mbox{}~~~~~ 
  + \io (\Id+\eps n\otimes n)(\nabla c_p+c_p\nabla\Phi)\cdot\Big(\frac{\nabla c_p}{c_p}+\nabla\Phi\Big)=0,
\eea 
whence, by positive definiteness of the matrix $n\otimes n$,
\be\label{cp:11b}
  \ddt \io c_p(\ln c_p-1)
   \underbrace{+\io c_p'\Phi}_{:=\mcA_{11}}+\underbrace{\io (v\cdot\nabla c_p)\Phi}_{:=\mcA_{21}}
    +\io \frac{1}{c_p}|\nabla c_p+c_p\nabla\Phi|^2\le 0.
\ee
Similarly, testing \eqref{eq:cm} by $\ln c_m-\Phi$ we have
\bea\non
  & \ddt \io c_m(\ln c_m-1)
  - \io c_m'\Phi
  - \io (v\cdot\nabla c_p)\Phi\\
 \label{cm:11} 
  & \mbox{}~~~~~ 
  + \io (\Id+\eps n\otimes n)(\nabla c_m - c_m\nabla\Phi)\cdot\Big(\frac{\nabla c_m}{c_m} - \nabla\Phi\Big) = 0,
\eea 
whence
\be\label{cm:11b}
 \ddt \io c_m(\ln c_m-1)
  \underbrace{-\io c_m'\Phi}_{:=\mcA_{12}}\underbrace{-\io (v\cdot\nabla c_m)\Phi}_{:=\mcA_{22}}
   +\io \frac{1}{c_m}|\nabla c_m-c_m\nabla\Phi|^2\le 0.
\ee
We now test \eqref{eq:Phi} by $-\partial_t\Phi$ getting, after an integration by parts,
\be\label{Phi:11}
  -\io (\Id+\eps n\otimes n) \nabla\Phi \cdot\nabla \Phi_t+\io (c_p-c_m)\Phi_t = 0,
\ee
which can be expanded into
\bea\non
  & \underbrace{-\frac{\eps}{2}\ddt \io (n\otimes n\nabla\Phi)\cdot\nabla\Phi-\io\nabla\Phi\cdot\nabla\Phi_t+\io (c_p-c_m)\Phi_t}_{:=\mcA_{13}}\\
 \label{Phi:12} 
  & \mbox{}~~~~~
  =\underbrace{-\frac{\eps}{2} \io \partial_t(n\otimes n)\nabla\Phi\cdot\nabla\Phi}_{:=\mcA_3}.
\eea
Multiplying  \eqref{eq:Phi}  by $-v\cdot\nabla\Phi$ and integrating by parts we get
\be\label{Phi:21}
  -\io\big((\Id+\eps n\otimes n)\nabla\Phi\big)\cdot \nabla(v\cdot\nabla\Phi)
   + \io (c_p-c_m)v\cdot\nabla\Phi=0.
\ee
Splitting the left-hand side and integrating by parts further, we obtain
\be\label{Phi:22}
  \underbrace{-\io (\nabla\Phi\otimes\nabla\Phi):\nabla v}_{:=\mcA_4}
   \underbrace{-\eps\io \big((n\otimes n)\nabla\Phi\big)\cdot�\nabla(v\cdot\nabla\Phi)}_{:=\mcA_5}
   \underbrace{+\io (c_p-c_m)v\cdot\nabla\Phi}_{:=\mcB_2}=0.
\ee
Multiplying \eqref{eq:v} by $v$ and integrating by parts, we get
\bea
  & \frac 12 \ddt \|v\|^2
   + \alpha_4\| D(v) \|^2 =
    \underbrace{\io (\nabla n\odot \nabla n):\nabla v}_{:=\mcA_6}
    \underbrace{-\io \nabla\Phi\otimes\nabla\Phi:\nabla v}_{\mcA_4}\non\\
  & \mbox{}~~~~~  
    \underbrace{-\eps\io \big( (\nabla\Phi\otimes\nabla\Phi) (n\otimes n) \big) :\nabla v}_{:=\mcB_{51}}\non\\
  & \mbox{}~~~~~
  - \io\underbrace{ \big(\alpha_1(Dn\cdot n)n\otimes n+\alpha_2 \mn\otimes n+\alpha_3 n\otimes\mn
         +\alpha_5 Dn\otimes n+\alpha_6 n\otimes Dn \big):\nabla v}_{:=\mcB_7}.\label{v:11}
\eea
Finally, multiplying \eqref{eq:n} by $\dot n = n_t+v\cdot \nabla n$ we get
\bea\non
   & \io\underbrace{ \big( \mn + D(v)n\big)\cdot \dot n}_{:=\mcA_7}
   + \frac{1}{2} \ddt \|\nabla n\|^2
   + \ddt\io \mcF(n)
   \underbrace{ +\io \nabla n\cdot \nabla(v\cdot \nabla n)}_{:=\mcB_6}\\
 \label{n:11}
  & \mbox{}~~~~~ 
   =\underbrace{\eps\io \nabla\Phi\otimes\nabla\Phi: n\otimes n_t}_{\mcB_3}
   \underbrace{ + \eps\io \big((\nabla\Phi\otimes\nabla\Phi) n\big)\cdot (v\cdot\nabla) n}_{\mcB_{52}}.
\eea
We can now sum \eqref{cp:11b}, \eqref{cm:11b}, \eqref{Phi:12}, \eqref{Phi:22}, \eqref{v:11}, \eqref{n:11}. 
We combine a number of terms and may note several cancellations, namely
\bea
  & \ddt\io \bigg( -\frac 12(\Id+\eps n\otimes n)\nabla\Phi \cdot \nabla\Phi
      +(c_p-c_m)\Phi \bigg) 
   =\mcA_{11}+\mcA_{12}+\mcA_{13},\non\\
 \non
  & \mcA_{21}+\mcA_{22}=\mcB_2, \qquad 
    \mcA_3 = \mcB_3, \qquad
   \mcA_6=\mcB_6.
\eea
The most delicate cancellation is $\mcA_5=\mcB_{51}+\mcB_{52}$, which amounts to
\be
  - \io (n\otimes n\nabla\Phi)\cdot \nabla (v\cdot\nabla\Phi)
   = - \io(\nabla\Phi\otimes\nabla\Phi)n\otimes n:\nabla v
    + \io (\nabla\Phi\otimes\nabla\Phi)n)\cdot (v\cdot\nabla)n, \non
\ee
which, after expanding $(n\otimes n\nabla\Phi)\cdot \nabla (v\cdot\nabla\Phi)
=(n\otimes n\nabla\Phi)\cdot (\nabla v\nabla\Phi)+ (n\otimes n \nabla\Phi)\cdot (v\cdot\nabla)\nabla\Phi$, simplifies to
\be\label{trickyintparts}
  - \io n_in_j\partial_j\Phi v_k\partial_i\partial_k \Phi
   = \io\partial_i\Phi\partial_j\Phi n_j v_k \partial_k n_i.
\ee
Then, we integrate by parts the $\partial_k $ derivative and note that no boundary terms 
appear due to the choice of periodic boundary conditions. Hence, using $\partial_k v_k=0$ 
we obtain
\bea\non
  & - \io n_in_j\partial_j\Phi v_k\partial_i\partial_k \Phi
   = \io n_{i,k}n_j \partial_j\Phi\partial_i\Phi v_k\\
  & \mbox{}~~~~~
   + \io n_in_{j,k} \partial_j\Phi\partial_i\Phi v_k
   + \io n_in_j \partial_j\partial_k\Phi\partial_i\Phi v_k.
\eea
We note that after permuting the indices the above turns into
\be
  -2\io n_in_j\partial_j\Phi v_k\partial_i\partial_k \Phi
   = 2\io n_{i,k}n_j \partial_j\Phi\partial_i\Phi v_k,
\ee
which is exactly \eqref{trickyintparts}, thus proving the claimed cancellation $\mcA_5=\mcB_{51}+\mcB_{52}$.

Furthermore, as in \cite{electrolytes} we have
\be
  \mcA_7+\mcB_7= \alpha_1(n\cdot Dn)^2 + 2 (\mathring{n}\cdot Dn)
   +\alpha_4|D|^2
   +(\alpha_5+\alpha_6)|Dn|^2
   + |\mathring{n}|^2
\ee
Collecting the above computations, and using also the charge conservation property
\be\label{charge}
  \ddt \io ( c_p + c_m ) = 0,
\ee
we finally arrive at 
\bea
  & \ddt \io \bigg(\frac 12 | v|^2 +\frac 12 |\nabla n|^2
   +\mcF(n) +c_p\ln c_p+c_m\ln c_m\non\\
  & \mbox{}~~~~~~~~~~~~~~~
   -\frac 12 (\Id+\eps n\otimes n)\nabla\Phi \cdot \nabla\Phi
   + (c_p - c_m ) \Phi  \bigg)\non\\
  & \mbox{}~~~~~
   + \io \bigg(  \frac{1}{c_p} |\nabla c_p+c_p\nabla\Phi|^2
   +  \frac{1}{c_m} |\nabla c_m-c_m\nabla\Phi|^2
   + \alpha_4| D(v) |^2  \bigg) \non\\
 \label{quasi:en}
  & \mbox{}~~~~~
    +\io \Big( \alpha_1(n\cdot Dn)^2
    + 2 (\mathring{n}\cdot Dn)^2
    + (\alpha_5+\alpha_6) |Dn|^2
    + |\mathring{n}|^2 \Big) 
  \le 0.
\eea
Let us now notice that, testing \eqref{eq:Phi} by $\Phi$ 
and integrating by parts, there follows
\be
  \io (c_p-c_m) \Phi
   = \io (\Id + \eps n\otimes n )\nabla\Phi \cdot \nabla\Phi.
\ee
Replacing the above into \eqref{quasi:en}, we obtain \eqref{est:en},
which concludes the proof.
\end{proof}
\noindent%
The energy estimate \eqref{est:en} implies a number of apriori bounds for the 
solutions of system \eqref{eq:cp}-\eqref{eq:n}, provided that the dissipation
term is nonnegative. In our simplified setting (where we have set $\gamma_1,\gamma_2=1$,
this results as a restriction on the choice of the parameters $\alpha_j$. Namely, we can observe the 
following
\begin{lemma}\label{lemma1}
If \eqref{hp:alpha} holds true, 
then we have, for some $\delta'>0$, 
\be\label{pos:diss}
\alpha_4|D|^2+  \alpha_1(n\cdot Dn)^2
   + 2 (\mathring{n}\cdot Dn)
   +(\alpha_5+\alpha_6)|Dn|^2
   + |\mathring{n}|^2\ge  \delta'\left( |Dn|^2
   + |\mathring{n}|^2\right)
\ee 
for arbitrary $\mn\in\mathbb{R}^3,n\in\mathbb{R}^3,\,D\in\mathbb{R}^{3\times 3}$ with $|n|\le 1$ and the matrix $D$ symmetric and traceless.
\end{lemma}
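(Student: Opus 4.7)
The plan is to estimate from below each of the terms in the left--hand side of \eqref{pos:diss} that carries an indefinite sign, using two simple algebraic facts that follow from the constraint $|n|\le 1$ (which is enforced by the singular potential $\mcF$): namely, by Cauchy--Schwarz,
\begin{equation}\label{planineq}
(n\cdot Dn)^2 \le |n|^2\,|Dn|^2 \le |Dn|^2, \qquad |Dn|^2 \le |n|^2\,|D|^2 \le |D|^2.
\end{equation}
The symmetry and tracelessness of $D$ will not actually be needed; the statement is purely pointwise and algebraic.

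First, I would absorb the cross term $2(\mathring{n}\cdot Dn)$ into $|\mathring{n}|^2$ and a multiple of $|Dn|^2$ via Young's inequality with parameter tuned to the $\delta$ in \eqref{hp:alpha}:
\begin{equation*}
2\,|\mathring{n}\cdot Dn| \;\le\; (1-\delta)\,|\mathring{n}|^2 \;+\; \frac{1}{1-\delta}\,|Dn|^2.
\end{equation*}
Since the left--hand side of \eqref{pos:diss} contains the full $|\mathring{n}|^2$, this leaves a residual $\delta\,|\mathring{n}|^2$ and costs $\frac{1}{1-\delta}|Dn|^2$.

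Next, I would bound the three signed terms containing $|Dn|^2$--like quantities from below, obtaining
\begin{equation*}
\alpha_1(n\cdot Dn)^2 + (\alpha_5+\alpha_6)|Dn|^2 \;\ge\; -\bigl(|\alpha_1|+|\alpha_5|+|\alpha_6|\bigr)\,|Dn|^2,
\end{equation*}
where for the first term I used $(n\cdot Dn)^2\le |Dn|^2$. Combining the last two displays and using $|Dn|^2\le|D|^2$ from \eqref{planineq}, the entire expression in \eqref{pos:diss} is bounded below by
\begin{equation*}
\left(\alpha_4 - |\alpha_1|-|\alpha_5|-|\alpha_6| - \frac{1}{1-\delta}\right)|D|^2 \;+\; \delta\,|\mathring{n}|^2.
\end{equation*}
By hypothesis \eqref{hp:alpha}, the coefficient in front of $|D|^2$ is some strictly positive number $C>0$, so the sum is at least $\min(C,\delta)\bigl(|D|^2+|\mathring{n}|^2\bigr)$, and a final application of $|Dn|^2\le|D|^2$ gives \eqref{pos:diss} with $\delta':=\min(C,\delta)$.

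There is essentially no obstacle: the whole proof reduces to one application of Young's inequality, two Cauchy--Schwarz estimates based on $|n|\le 1$, and a direct use of the structural assumption \eqref{hp:alpha}. The only subtlety worth emphasizing is that the pointwise bound $|n|\le 1$ (the very reason for choosing the singular potential \eqref{hp:mcF}--\eqref{hp:F}) is what allows $|Dn|^2$ to be controlled by $|D|^2$; without it, the coefficient relation \eqref{hp:alpha} would have to be modified to depend on a pointwise bound for $n$ established a priori.
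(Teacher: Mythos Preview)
Your proof is correct and follows essentially the same route as the paper's: both use Cauchy--Schwarz with $|n|\le 1$ to get $(n\cdot Dn)^2\le |Dn|^2\le |D|^2$, and Young's inequality with parameter $1-\delta$ to absorb the cross term $2(\mathring{n}\cdot Dn)$, so that assumption \eqref{hp:alpha} yields the positive lower bound. The only difference is cosmetic ordering---the paper passes from $|Dn|$ to $|D|$ inside the Young step, while you do it afterward---and you are slightly more explicit about extracting the constant $\delta'=\min(C,\delta)$ and about recovering $|Dn|^2$ on the right-hand side.
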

\begin{proof}
Noting that we have (where we use that $|n|\le 1$):
$$
(n\cdot Dn)^2\le |n|^2|Dn|^2\le |D|^2, \quad |2(\mathring{n}\cdot Dn)|\le 2|\mathring{n}||Dn|\le (1-\delta)|\mathring{n}|^2+\frac{1}{1-\delta}|D|^2
$$ we immediately deduce that \eqref{hp:alpha} implies the claimed \eqref{pos:diss}.
\end{proof}
\noindent%
In the sequel we shall always assume \eqref{hp:alpha}. In this way,
as a consequence of the energy estimate~\eqref{est:en}, using also the 
positive definiteness of the matrix $n\otimes n$ and  \eqref{hp:F}, we can obtain 
a number of apriori bounds holding for any hypothetical solution 
of the system and independently of any eventual approximation
or regularization parameter. Namely, we have
\bea\label{st:11}
  & \| v \|_{L^\infty(0,T;H)} + \| v \|_{L^2(0,T;V)} \le c,\\
 \label{st:12}
  & \| n \|_{L^\infty(0,T;V)} \le c, \qquad
   | n | \le 1 \quad\text{a.e.~in }\,(0,T)\times \tor,\\  
 \label{st:13}
  & c_p, c_m \ge 0 \quad\text{a.e.~in }\,(0,T)\times \tor,\\
 \label{st:14}
  & \| \nabla \Phi \|_{L^\infty(0,T;H)} \le c.
\eea 
where $c$ is a constant depeding only on $E(0)$ as defined in \eqref{defi:en}. 
Note that the second bound in \eqref{st:12} directly follows from our choice of the potential $F$. 
\begin{proposition}[Maximum principle]\label{prop:maxprinc}
 Let $c_p^0,c_m^0:\tor\to \R_+$ satisfy\/ \eqref{init} 
 and let $v,$ $n$ satisfy \eqref{st:11}, \eqref{st:12}. 
 Then, if $(c_p,c_m,\Phi)$ solve equations\/ \eqref{eq:cp}, \eqref{eq:cm}, \eqref{eq:Phi} 
 subject to periodic boundary conditions and initial data\/ $c_p^0,c_m^0$ 
 as above, then there follows 
 \be\label{mp}
   |c_p(x,t)|, |c_m(x,t)|\le \bar c, \quad\text{a.e.~in }\,(0,T)\times \tor.
 \ee
\end{proposition}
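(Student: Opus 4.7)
The plan is to prove the two bounds by separate Stampacchia-type truncation arguments, relying on the Poisson equation \eqref{eq:Phi} to handle the electromigration coupling via $\nabla\Phi$. Writing $A:=\Id+\eps n\otimes n$, which is symmetric positive definite, the key identity throughout is
$$\io A\nabla\Phi\cdot\nabla\chi = \io(c_p-c_m)\chi$$
for any test function $\chi$, obtained by multiplying \eqref{eq:Phi} by $\chi$ and integrating by parts with periodic boundary conditions.

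For the upper bound $c_p,c_m\le\bar c$, I test \eqref{eq:cp} by $\phi_p:=(c_p-\bar c)_+$ and \eqref{eq:cm} by $\phi_m:=(c_m-\bar c)_+$. The convective terms vanish by $\dive v=0$ and the diffusive parts produce the non-negative quantities $\io A\nabla\phi_p\cdot\nabla\phi_p$ and $\io A\nabla\phi_m\cdot\nabla\phi_m$. For the electromigration term in the $c_p$ equation, writing $c_p=\bar c+\phi_p$ on the support of $\nabla\phi_p$ and applying the identity above yields $\bar c\io(c_p-c_m)\phi_p+\tfrac12\io(c_p-c_m)\phi_p^2$; the analogous computation for $c_m$ comes with opposite sign. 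Adding the two energy identities, the $c_p-c_m$ couplings collapse into
$$\bar c\io(c_p-c_m)(\phi_p-\phi_m)+\tfrac12\io(c_p-c_m)(\phi_p^2-\phi_m^2),$$
both of which are pointwise non-negative: the first by monotonicity of the positive-part truncation $s\mapsto(s-\bar c)_+$, and the second via the factorization $\phi_p^2-\phi_m^2=(\phi_p-\phi_m)(\phi_p+\phi_m)$ together with $\phi_p+\phi_m\ge 0$. Therefore $\tfrac{d}{dt}(\|\phi_p\|^2+\|\phi_m\|^2)\le 0$, and the initial vanishing from \eqref{hp:init} forces $\phi_p\equiv\phi_m\equiv 0$.

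For the non-negativity, testing \eqref{eq:cp} by $-(c_p)_-$ yields, by the same procedure,
$$\tfrac12\tfrac{d}{dt}\|(c_p)_-\|^2+\io A\nabla(c_p)_-\cdot\nabla(c_p)_- = -\tfrac12\io(c_p-c_m)(c_p)_-^2.$$
On the support $\{c_p<0\}$ one has $c_p=-(c_p)_-$, so the right-hand side becomes $\tfrac12\io(c_p)_-^3+\tfrac12\io c_m(c_p)_-^2$, each of which is controlled by $C\|(c_p)_-\|^2$ with $C$ depending on the finite $L^\infty$-norms of $c_p$ and $c_m$ granted by the smoothness of the presumptive solution. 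Grönwall's lemma and $(c_{p,0})_-\equiv 0$ then force $(c_p)_-\equiv 0$; the identical argument applied to \eqref{eq:cm} gives $(c_m)_-\equiv 0$.

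The principal difficulty is the coupling cancellation in the upper-bound step: the non-local $\nabla\Phi$ forcing could in principle destroy the sign structure, and it is only the strict monotonicity of $s\mapsto(s-\bar c)_+$ that rescues the argument. The non-negativity step is comparatively mechanical, amounting to a standard maximum-principle type estimate closed by Grönwall using the $L^\infty$-regularity of the smooth solution.
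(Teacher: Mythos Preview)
Your upper-bound argument is essentially the paper's: both test by $(c_p-\bar c)_+$ and $(c_m-\bar c)_+$, sum, and use \eqref{eq:Phi} to convert the electromigration coupling into $-\io(c_p-c_m)\big(M(c_p)-M(c_m)\big)$ with $M(r)=\bar c(r-\bar c)_+ +\tfrac12((r-\bar c)_+)^2$; you simply split $M$ into its two summands and check the sign of each piece separately rather than invoking the monotonicity of $M$ at once. Your explicit Gr\"onwall proof of non-negativity is in fact more than the paper supplies in this proposition---the paper only carries out the upper bound here and relies on the earlier claim \eqref{st:13}, extracted from the energy-law structure, for $c_p,c_m\ge 0$.
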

\begin{proof}
We multiply \eqref{eq:cp} by $(c_p-\bar c)^+$ and integrate over $\tor$ and by parts, to obtain
\bea
 & \frac{1}{2} \ddt \io |(c_p-\bar c)^+|^2
  + \frac12 \io v\cdot\nabla ((c_p-\bar c)^+)^2\non\\
 & \mbox{}~~~~~
  + \io (\Id+\eps n\otimes n)\nabla (c_p-\bar c)^+\cdot \nabla (c_p-\bar c)^+ \non\\
 & \mbox{}~~~~~
  + \io (\Id+\eps n\otimes n)\nabla \Phi\cdot\nabla\Big(\frac{1}{2} ((c_p-\bar c)^+)^2+\bar c (c_p-\bar c)^+\Big) = 0.\label{est:cp+}
\eea
Similarly, we get from \eqref{eq:cm}
\bea
 & \frac{1}{2} \ddt \io |(c_m-\bar c)^+|^2
  + \frac12 \io v\cdot\nabla ((c_m-\bar c)^+)^2\non\\
 & \mbox{}~~~~~
  + \io (\Id+\eps n\otimes n)\nabla (c_m-\bar c)^+\cdot \nabla (c_m-\bar c)^+ \non\\
 & \mbox{}~~~~~
  - \io (\Id+\eps n\otimes n)\nabla \Phi\cdot\nabla\Big(\frac{1}{2} ((c_m-\bar c)^+)^2+\bar c (c_m-\bar c)^+\Big) = 0.\label{est:cm+}
\eea
We now define
\be\label{def:M}
  \displaystyle
  M(r):=\begin{cases}
   0 &\textrm{ if } r\le \bar c,\\
  \frac{1}{2}((r-\bar c)^+)^2+\bar c(r-\bar c)^+ & \textrm{ if } r\ge \bar c,
  \end{cases}
\ee
Then, summing \eqref{est:cp+} and \eqref{est:cm+} and using incompressibility,
we deduce
\bea\non
  & \frac{1}{2} \ddt \io \big( |(c_p-\bar c)^+|^2+|(c_m-\bar c)^+|^2 \big)\\
  & \mbox{}~~~~~
   \le -\io (\Id+\eps n\otimes n)\nabla \Phi\cdot \nabla (M(c_p)-M(c_m)).
\eea
The integral on the right-hand side can be computed by using \eqref{eq:Phi}.
This leads to 
\bea
  \frac{1}{2} \ddt \io \big( |(c_p-\bar c)^+|^2+|(c_m-\bar c)^+|^2 \big)
   \le -\io (c_p-c_m) (M(c_p)-M(c_m)) \le 0,
\eea 
the inequality following from the monotonicity of the function $M$. 
Noting that \eqref{init} implies that the left-hand side is null
at $t=0$, we obtain the claimed estimate.
\end{proof}
\noindent%
In particular, we have obtained the additional bound
\be\label{st:21}
  \| c_p \|_{L^\infty(0,T;L^\infty(\tor))}
   + \| c_m \|_{L^\infty(0,T;L^\infty(\tor))}
   \le c.
\ee where the constant $c$ depends just on the $L^\infty$ norm of $c_p(0)$ and $c_m(0)$.
We can then test \eqref{eq:cp} by $c_p$ and \eqref{eq:cm} by
$c_m$. Using once more the positive definiteness of the 
matrix $n\otimes n$, we may note that
\be\label{co:21}
 \left|\io c_p \nabla \Phi\cdot \nabla c_p\right|
   \le \| c_p \|_{L^\infty(\tor)} \| \nabla\Phi \|_{H} \| \nabla c_p \|_{H}
   \le c \| \nabla c_p \|_{H} 
   \le c + \frac12 \| \nabla c_p \|_{H}^2,
\ee
with an analogous relation holding for $c_m$ and where the constants $c>0$ are 
independent of time in view of \eqref{st:14} and \eqref{st:21}. 
Analogously we can estimate the term $-\io\eps (n\otimes n)c_p \nabla \Phi\cdot \nabla c_p$ by \eqref{st:12}. 

Then, it is not difficult to deduce the parabolic regularity estimate
\be\label{st:22}
  \| c_p \|_{L^2(0,T;V)}
   + \| c_m \|_{L^2(0,T;V)}
   \le c.
\ee
In view of the fact that $\Phi$ is defined up to an additive constant, it is not
restrictive to assume that 
\be\label{zeromean}
  \Phi\oo = \io \Phi(t) = 0 \quad\text{for a.e.~}\,t\in (0,T).
\ee
Of course, such a normalization property, joint with \eqref{st:14},
implies
\be\label{st:14b}
  \| \Phi \|_{L^\infty(0,T;V)}
   \le c.
\ee
We have, however, a better property which is given by the following
\begin{lemma}[Uniform boundedness of $\Phi$]\label{lem:phi}
We have the additional estimate
\be\label{st:31}
  \| \Phi \|_{L^\infty(0,T;L^\infty(\tor))} \le c.
\ee 
\end{lemma}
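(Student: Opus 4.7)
The plan is to apply a Moser iteration scheme at each fixed time $t \in (0,T)$ to the elliptic equation \eqref{eq:Phi}. The structural input is twofold: the coefficient matrix $A(t,x) := \Id + \eps\, n(t,x) \otimes n(t,x)$ is symmetric with eigenvalues in $[1, 1+\eps]$ independently of $(t,x)$ (since $|n|\le 1$ by \eqref{st:12}), so the problem is uniformly elliptic; and the right-hand side $c_p - c_m$ is bounded in $L^\infty((0,T)\times\tor)$ thanks to the maximum-principle bound \eqref{st:21}. Hence at each $t$ we are dealing with $-\dive(A\nabla\Phi) = f$ with $\|f\|_\infty$ and the ellipticity constants controlled uniformly in $t$.

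First I would test \eqref{eq:Phi} at a fixed time with $|\Phi|^{p-2}\Phi$ for $p \ge 2$. Integration by parts on the torus (no boundary contribution) combined with uniform ellipticity of $A$ yields
\begin{equation*}
  (p-1) \io |\Phi|^{p-2}|\nabla\Phi|^2 \;\le\; \|c_p - c_m\|_{L^\infty(\tor)} \io |\Phi|^{p-1}.
\end{equation*}
Setting $w := |\Phi|^{p/2}$, so that $|\nabla w|^2 = (p/2)^2 |\Phi|^{p-2}|\nabla\Phi|^2$, this rewrites as $\frac{4(p-1)}{p^2}\|\nabla w\|_{L^2}^2 \le C\|\Phi\|_{L^{p-1}}^{p-1}$ with $C$ independent of $t$ and of $p$. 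Adding $\|w\|_{L^2}^2 = \|\Phi\|_{L^p}^p$ to both sides and invoking the Sobolev embedding $H^1(\tor) \hookrightarrow L^6(\tor)$, I obtain a recursive inequality of the form
\begin{equation*}
  \|\Phi\|_{L^{3p}(\tor)}^p \;\le\; C\,p \Big(\|\Phi\|_{L^{p-1}(\tor)}^{p-1} + \|\Phi\|_{L^p(\tor)}^p\Big),
\end{equation*}
with $C$ still independent of $t$ and $p$.

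Second, I would iterate with the sequence $p_k := 6 \cdot 3^k$, using as base step the uniform bound $\|\Phi(t)\|_{L^{p_0}(\tor)} \le C$ from \eqref{st:14b} combined with Sobolev. Writing $a_k := \max(1, \|\Phi(t)\|_{L^{p_k}(\tor)})$ and using $\|\Phi\|_{L^{p-1}(\tor)} \le C \|\Phi\|_{L^p(\tor)}$ since $\tor$ has finite measure, the recursion reduces to $a_{k+1} \le (C p_k)^{1/p_k}\, a_k$, i.e.
\begin{equation*}
  \log a_{k+1} \;\le\; \log a_k + \frac{\log(Cp_k)}{p_k}.
\end{equation*}
Since $p_k$ grows geometrically, the series $\sum_{k\ge 0} p_k^{-1}\log(C p_k)$ converges, so $\log a_k$ stays bounded as $k\to\infty$, giving $\|\Phi(t)\|_{L^\infty(\tor)} = \lim_k a_k \le C$ with $C$ independent of $t$. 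This yields \eqref{st:31}.

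The main obstacle, beyond the bookkeeping in the iteration, is ensuring that all constants appearing at each step depend only on quantities already controlled uniformly in $t$, namely the ellipticity of $A$, $\|c_p - c_m\|_{L^\infty((0,T)\times\tor)}$, and $\|\Phi\|_{L^\infty(0,T; L^6(\tor))}$; this is the reason why the base of the iteration must be taken from the energy/Sobolev bound \eqref{st:14b} rather than from any time-dependent quantity. A minor technical caveat is that $|\Phi|^{p-2}\Phi$ is only $C^1$ for non-integer $p$; this is handled by a standard regularization (for instance replacing $|\Phi|^{p-2}$ by $(\Phi^2 + \delta)^{(p-2)/2}$ and letting $\delta \to 0^+$) inside each step.
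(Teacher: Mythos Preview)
Your proposal is correct and follows essentially the same Moser iteration scheme as the paper: test \eqref{eq:Phi} with $|\Phi|^{p-2}\Phi$, use uniform ellipticity of $\Id+\eps\,n\otimes n$ (from $|n|\le1$) and the $L^\infty$ bound on $c_p-c_m$, apply the Sobolev embedding $H^1\hookrightarrow L^6$ to pass from $L^p$ to $L^{3p}$, and iterate along a geometric sequence of exponents started from the $L^6$ bound coming from \eqref{st:14b}. The only cosmetic difference is that the paper absorbs $\|\Phi\|_{p-1}^{p-1}$ via Young's inequality into $c/p + c\|\Phi\|_p^p$, whereas you use H\"older on the torus to reduce $\|\Phi\|_{p-1}$ to $\|\Phi\|_p$; the resulting recursion and its summation are the same.
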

\begin{proof}
The proof follows by applying a Moser iteration argument on equation \eqref{eq:Phi}
and using the uniform boundedness of the right-hand side following from estimate~\eqref{st:21}.
We give some highlights for the reader's convenience. As a general rule, we multiply 
equation \eqref{eq:Phi} by $(\Phi)^{p-1}:=| \Phi |^{p-1} \sign \Phi$ where the exponent $p$
will be taken larger and larger. This gives
\bea\non 
  (p-1)\io (\Id + \eps n\otimes n) | \Phi|^{p-2} \nabla \Phi \cdot \nabla \Phi 
   & = \io (c_p - c_m) | \Phi |^{p-1}\sign \Phi\\
   & \le c \io | \Phi |^{p-1} \le  c \io \left(\frac{1}{p}+\frac{p-1}{p} | \Phi |^{p}\right)\non\\
    & \le \frac{c}{p}+c\io | \Phi |^{p}.\label{co:41}
\eea
As a first step, we take $p=p_0=6$. Then, controlling the right-hand side by the Poincar\'e-Wirtinger
inequality we deduce (cf.~also \eqref{zeromean})
\be\label{co:42}
  c \io | \Phi |^{6}
   = c \| \Phi - \Phi\oo \|_{6}^6
   \le c \| \nabla \Phi \|_{2}^6
   \le c,
\ee 
the last inequality following from \eqref{st:14}. Here and below, we are noting simply by $\| \cdot \|_q$ 
the norm in $L^q(\tor)$, $1 \le q \le \infty$, for notational simplicity. We also point 
out that all the estimates obtained in this proof are uniform with respect to the time variable,
because so are \eqref{st:14} and \eqref{st:21} that serve as a starting point of the argument.

Hence, noting that
\be\label{co:43}
  (p-1)\io (\Id + \eps n\otimes n) | \Phi|^{p-2} \nabla \Phi \cdot \nabla \Phi 
  \geq\frac{4(p-1)}{p^2}\io \big| \nabla (\Phi)^{p/2} \big|^2
\ee 
at the first iteration, i.e.~for $p=6$, we deduce
\be\label{co:44}
  \big\| \nabla \Phi^3 \big\|_2 
   \le c,
\ee 
whence, recalling \eqref{st:14} and using Sobolev's embeddings,
\be\label{co:45}
  \| \Phi \|_{18}^3
  \le c(\big\| \nabla | \Phi |^3 \big\|_2 + \| \Phi \|_6^3)
   \le c.
\ee 
%
%
%

Now, in order to take care of further iterations, we need to keep trace of the 
dependence on $p$ of the various constants. Let us, then, go back to 
\eqref{co:41} with a generic $p$ and combine it with \eqref{co:43} to deduce (for $p\ge 2$)
$$
\io \big| \nabla (\Phi)^{p/2} \big|^2\le \frac{cp}{(p-1)}+\frac{cp^2}{p-1}\io |\Phi|^p\le c+c(p+2)\io |\Phi|^p
$$ where $c$ is independent of $p$.
 
Adding also $\| \Phi \|_p^p$ to both hand sides and using the Sobolev embedding, we then deduce
\bea\non
  \| \Phi \|_{3p}^p
   & = \big\| (\Phi)^{p/2} \big\|_{6}^2
   \le c \big\| (\Phi)^{p/2} \big\|_{V}^2\\
 \label{co:47}  
   & \le c \big\| (\Phi)^{p/2} \big\|_{2}^2 + c \io \big| \nabla (\Phi)^{p/2} \big|^2
   \le c+ c (p+3) \| \Phi \|_p^p\le c+cp\| \Phi \|_p^p ,
\eea
where $c$ is still  independent of~$p$. 

We define $b_p=\max (1,\| \Phi \|_p)$. Then, assuming without loss of generality that $c\ge 1$ the last inequality implies:
$$
  b_{3p}^p\le cpb_p^p
$$ 
with $c>1$ a constant independent of $p$. 
Then, since $\ln b_{3p}\le \frac{\ln(cp)}{p}+\ln b_p$, we get 
\begin{align}\nonumber
\ln b_{3^n p}&\le\frac{\ln(c 3^{(n-1)}p )}{3^{n-1} p}+\ln b_{3^{n-1} p}\\
\nonumber
&\leq \frac{\ln(c 3^{(n-1)}p)}{3^{n-1}p }+\frac{\ln(c 3^{(n-2)}p)}{3^{n-2}p }+\dots +\ln b_p.
\end{align}
and hence
$$
\ln b_{3^n p}\le \sum_{k=1}^{n-1} \frac{\ln (c3^kp)}{c3^k p}+\ln b_p.
$$
Noting that  constant $c$ is independent of $n$ and $p$ and letting $n\nearrow\infty$
we obtain \eqref{st:31}.
\end{proof}
\noindent%
It is worth observing that the bounds derived up to this point are not sufficient for 
passing to the limit in (a suitable approximation) of system \eqref{eq:cp}-\eqref{eq:n}, the main trouble being represented by the quadratic terms 
in $\nabla\Phi$ and $\nabla n$. Indeed, at the moment such quantities are 
bounded only in $L^2$ with respect to space variables. Hence, at the limit we 
might expect occurrence of defect measures. Fortunately, this is not the case, because it is possible to improve a bit the regularity properties
proved so far. 
\begin{lemma}[Additional regularity estimate]\label{lem:meyer}
 Let us assume that the initial data satisfy \eqref{hp:init}--\eqref{hp:n0}. 
 Then the following additional regularity conditions
 hold: 
 \bea\label{st:a1}
  & \| \nabla\Phi \|_{L^\infty(0,T;L^{p_M}(\tor))} \le c_{p_M}, \quad \hbox{ for some $p_M> 2$}\\
 \label{st:a2}
  & \| n_t \|_{L^{p_0}(0,T;L^{p_0}(\tor))} + \| \Delta n \|_{L^{p_0}(0,T;L^{p_0}(\tor))} \le c,\quad \hbox{ for some $p_0> 1$}\\
 \label{st:a3}
  &\| \partial \mcF (n) \|_{L^{p_0}(0,T;L^{p_0}(\tor))} \le c\quad \hbox{ for some $p_0> 1$}.
 \eea
\end{lemma}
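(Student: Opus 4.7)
The plan is to prove the three estimates in sequence, each reducing to a bound on a suitable forcing.

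For \eqref{st:a1}, I apply Meyers' higher integrability lemma to \eqref{eq:Phi}, regarded as the divergence-form elliptic equation $-\dive(A\nabla\Phi) = c_p - c_m$ with matrix $A := \Id + \eps\, n\otimes n$. Thanks to $|n|\le 1$, $A$ is symmetric, bounded in $L^\infty_{t,x}$, and uniformly elliptic with eigenvalues in $[1,1+\eps]$, while the right-hand side is uniformly bounded in $L^\infty$ by \eqref{st:21}. Meyers' theorem then produces an exponent $p_M > 2$, depending only on the ellipticity and the dimension, for which $\nabla\Phi(\cdot,t) \in L^{p_M}(\tor)$ with a $t$-uniform bound. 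Next, I rewrite the director equation \eqref{eq:n} as
\begin{equation}\label{pl:neq}
n_t - \Delta n + \partial\mcF(n) = G, \qquad G := -v\cdot\nabla n + \Omega(v)n - D(v)n + \eps(\nabla\Phi\otimes\nabla\Phi)n,
\end{equation}
and show that $G$ lies in $L^{p_0}((0,T)\times\tor)$ for some $p_0 \in (1,2]$. Indeed, $\Omega(v)n$ and $D(v)n$ lie in $L^2_{t,x}$ since $\nabla v\in L^2_{t,x}$ and $|n|\le 1$; by the preceding step, $\eps(\nabla\Phi\otimes\nabla\Phi)n\in L^\infty_t L^{p_M/2}_x$; and the Ladyzhenskaya--Sobolev interpolation of \eqref{st:11} gives $v\in L^{10/3}_{t,x}$, so that H\"older with $\nabla n \in L^\infty_t L^2_x$ yields $v\cdot\nabla n\in L^{5/4}_{t,x}$. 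Choosing $p_0 := \min\{5/4,\,p_M/2\}>1$ suffices.

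The main obstacle is to bootstrap this $L^{p_0}$-control of $G$ into the $L^{p_0}$-bound \eqref{st:a3} on the maximal monotone nonlinearity of the vector-valued equation \eqref{pl:neq}. The strategy exploits the radial structure of $\mcF$: since $F'(r)\ge -1$, I split $\partial\mcF(n) = \tilde\beta(n) - n$ with $\tilde\beta(n) := -\log(1-|n|^2)\,n$, a maximal monotone map of the form $\tilde h(|n|^2)n$ enjoying $\tilde h,\tilde h'\ge 0$. On the Yosida regularization $\partial\mcF_\lambda$, which inherits an analogous splitting $\tilde\beta_\lambda - n$ with the same sign structure, I work with smooth solutions $n^\lambda$ of the regularized equation and test against $|\tilde\beta_\lambda(n^\lambda)|^{p_0-2}\tilde\beta_\lambda(n^\lambda)$. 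The time-derivative contribution becomes the $t$-derivative of a nonnegative radial primitive of $n^\lambda$; the Laplacian contribution, after integration by parts and use of the identity $\partial_i(|y|^{p-2}y) = (p-1)|y|^{p-2}\partial_i y$, produces a principal nonnegative term plus, for $p_0<2$, a residual proportional to $|\nabla n^\lambda|^2\,|\tilde\beta_\lambda|^{p_0-2}|n^\lambda|^{p_0-4}$; using $|n^\lambda|\le 1$ together with \eqref{st:12}, this residual is absorbed via Young's inequality. The right-hand side is handled by $|G+n^\lambda|\,|\tilde\beta_\lambda|^{p_0-1}\le \eta\,|\tilde\beta_\lambda|^{p_0}+C_\eta(|G|^{p_0}+|n^\lambda|^{p_0})$, and a standard absorption argument delivers a uniform-in-$\lambda$ bound $\|\tilde\beta_\lambda(n^\lambda)\|_{L^{p_0}_{t,x}}\le c$. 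Monotone convergence for the Yosida approximation then yields \eqref{st:a3}.

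Once \eqref{st:a3} is available, \eqref{pl:neq} rewrites as $n_t-\Delta n = G - \partial\mcF(n)\in L^{p_0}((0,T)\times\tor)$. Maximal $L^{p_0}$-parabolic regularity for the heat operator on the torus---valid for every $1<p_0<\infty$, with initial datum $n_0 \in H^1(\tor)\subset L^{p_0}(\tor)$ since $p_0\le 2$---then splits the two contributions individually, yielding $n_t, \Delta n \in L^{p_0}_{t,x}$, that is \eqref{st:a2}. The sensitive point throughout is the vector-valued character of $\partial\mcF$ in Step 3: the radial ansatz $\partial\mcF(n)=F'(|n|^2)n$ and the sign-definite splitting above are precisely what make the scalar Moser-type argument of \cite{cgmr16} transfer to the present setting.
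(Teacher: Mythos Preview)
Your proposal follows essentially the same strategy as the paper: Meyers' estimate for $\nabla\Phi$, then rewrite \eqref{eq:n} as a semilinear heat equation with monotone nonlinearity and $L^{p_0}$ forcing, and test by a ``power'' of the monotone term to get \eqref{st:a3}, followed by maximal parabolic regularity for \eqref{st:a2}. The bookkeeping differs slightly: the paper tests componentwise by $|F'(|n|^2)|^{p-1}\sign F'(|n|^2)\,n_i$ (keeping the full $F'$ and splitting the domain into $\{F'\ge0\}$ and $\{-1<F'<0\}$ to handle the sign-indefinite gradient term), whereas you first move the bounded piece $-n$ to the right and test by $|\tilde\beta|^{p_0-2}\tilde\beta$, which avoids the domain split; and the paper pairs $\nabla n\in L^\infty_tL^2_x$ with $v\in L^2_tL^6_x$ to get $v\cdot\nabla n\in L^{3/2}_{t,x}$, slightly better than your $L^{5/4}_{t,x}$, so that $p_0=\min(3/2,p_M/2)$ rather than $\min(5/4,p_M/2)$.

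One step is imprecise and would not work as written. Your claimed residual $|\nabla n|^2\,|\tilde\beta|^{p_0-2}|n|^{p_0-4}$ cannot be ``absorbed via Young using $|n|\le1$ and \eqref{st:12}'': the exponent $p_0-4<0$ makes $|n|^{p_0-4}$ blow up near $n=0$, so $|n|\le1$ gives the wrong direction, and \eqref{st:12} only controls $\nabla n$ in $L^2$. The correct computation shows the Laplacian contribution is in fact nonnegative outright. Since your test function has the radial form $g(|n|^2)n$ with $g(r)=h(r)^{p_0-1}r^{(p_0-2)/2}\ge0$, summing over $i$ gives
\[
\sum_i\int\nabla n_i\cdot\nabla\big(g(|n|^2)n_i\big)=\int g(|n|^2)|\nabla n|^2+\tfrac12\int g'(|n|^2)\,|\nabla|n|^2|^2,
\]
and the only negative piece in $g'$ is $\tfrac{p_0-2}{2}h^{p_0-1}r^{(p_0-4)/2}$. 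Via the pointwise inequality $|\nabla|n|^2|^2\le 4|n|^2|\nabla n|^2$, that piece is bounded below by $(p_0-2)\,g(|n|^2)|\nabla n|^2$, leaving a net $(p_0-1)\int g(|n|^2)|\nabla n|^2\ge0$ --- no Young inequality is needed. With this correction the remainder of your argument goes through.
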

\begin{proof}
The key point stands in the application of some refined elliptic regularity result 
to equation \eqref{eq:Phi}. Indeed, in view of the bound $|n| \le 1$ and of
the positive definiteness of $n\otimes n$, the matrix $\Id + \eps n \otimes n$
is strongly elliptic and has bounded coefficients. Since
the \rhs\  of \eqref{eq:Phi} is uniformly bounded by \eqref{mp}, we can then apply the 
integrability result \cite[Thm.~1, p.~198]{meyers},  which implies 
\be\label{st:a1temp}
\| \nabla\Phi \|_{L^\infty(0,T;L^{p_M}(\tor))} \le c_{p_M} \quad \hbox{ for some $p_M> 2$}.
\ee
Note that, at least in three space dimensions, there is no quantitative
control of $p_M$. Nevertheless, we know that  $p_M>2$. 
As a consequence of \eqref{st:a1}, \eqref{eq:n} can be rearranged in the form
\be\label{eq:n2} 
 n_t - \Delta n + \partial \mcF(n) ={\underbrace{-v\cdot\nabla n+\Omega(v) n
- D(v)n+\eps\left(\nabla\Phi\otimes \nabla \Phi\right)n}_{:=f}},
\ee 
where a simple check based on the previous estimates \eqref{st:11}, \eqref{st:12} shows 
that, at least, 
$$
v\cdot\nabla n+\Omega(v) n
- D(v)n\in L^{\frac{3}{2}}(0,T;L^{\frac{3}{2}}(\tor))
$$ which together with \eqref{st:a1temp} implies
\be\label{cond:f} 
  f \in L^{p}(0,T;L^p(\tor)).
\ee   for all $p\le p_0$ where
\begin{equation}\label{p0}
p_0:=\min \left(\frac{3}{2},\frac{p_M}{2}\right).
\end{equation}

Recalling \eqref{hp:mcF}, we observe that,  componentwise, equation \eqref{eq:n2} takes the form 
\be\label{eq:n3} 
  \partial_t n_i - \Delta n_i + F'(|n|^2) n_i = f_i,
\ee 
where $F'$ is monotone because $F$ is convex. 

This property, however, has to be a bit clarified. Indeed, the function $\mcF$ 
may be nonsmooth, and its subdifferential $\partial\mcF$ may be (and in fact
has to be, in view of assumption~\eqref{hp:F}) a singular operator. Hence, 
here and below the use of $F'$ to represent the subdifferential $\partial\mcF$ is formal 
and to make the procedure fully rigorous one should rather perform some regularization of $\mcF$ 
and then pass to the limit. Since this kind of argument is rather standard, we omit details for brevity.

Take  from now on $p=:p_0$ (for simplicity of notation). We then test \eqref{eq:n3} by 
the function $G_i(n)=|F'(|n|^2)|^{p-1} \sign F'(|n|^2) n_i$ to obtain
\bea\non
 & \frac12 \io |F'(|n|^2)|^{p-1} \sign F'(|n|^2) \ddt | n_i |^2
  + \io | F'(|n|^2) |^p n_i^2
 \\
 \label{co:n1}
 & \mbox{}~~~~~
  +{ \io |F'(|n|^2)|^{p-1} \sign F'(|n|^2)  |\nabla n_i|^2}
 + \calM_i = \io f_i \ F'(|n|^2)|^{p-1} \sign F'(|n|^2) n_i,
\eea
where the ``mixed'' term $\calM$ is given by
\bea\non
  \calM_i & = (p-1) \io |F'(|n|^2)|^{p-2} F''(|n|^2) n_i \nabla |n|^2 \cdot \nabla n_i\\
 \label{co:n2} 
  & = \frac{(p-1)}2 \io |F'(|n|^2)|^{p-2} F''(|n|^2) \nabla |n|^2 \cdot \nabla n_i^2.
\eea 
Let us sum \eqref{co:n1} for $i=1,2,3$. It is then easy to check that 
\be\label{co:n3} 
  \sum_{i=1}^3 \calM_i 
  = \frac{(p-1)}2 \io |F'(|n|^2)|^{p-2} F''(|n|^2) \nabla |n|^2 \cdot \nabla |n|^2 \ge 0
\ee 
due to convexity of $F$. 
 We split the term  $\io |F'(|n|^2)|^{p-1} \sign F'(|n|^2)  |\nabla n|^2$ over two subsets of $\tor$, namely

$$
\tor_+:=\left\{x\in\tor, |n|^2(x)\ge 1-\frac{1}{e}\right\}, \textrm{ respectively }\tor_-:=\left\{x\in\tor, |n|^2(x)< 1-\frac{1}{e}\right\},
$$
where we neglect the dependence on $t$ for simplicity. 

Then, taking into account that $F'(r)\ge 0$ for $r\in (1-\frac{1}{e},1)$, 
 neglecting the positive term $\int_{\tor_+} |F'(|n|^2)|^{p-1} \sign F'(|n|^2)  |\nabla n|^2$ on the \lhs, and using that $F'(|n|^2(x))\in (-1,0)$ for $x\in\tor_-$ 
we deduce: 
\bea\non
  & \frac12 \io |F'(|n|^2)|^{p-1} \sign F'(|n|^2) \ddt | n |^2   + \io | F'(|n|^2) |^p |n|^2\\
 \non 
  & \mbox{}~~~~~ 
  \le \io F'(|n|^2)|^{p-1} \sign F'(|n|^2) f \cdot n +\int_{\tor_-} |F'(|n|^2)|^{p-1}  |\nabla n|^2
\\
 \non
  & \mbox{}~~~~~ 
   \le \big\| F'(|n|^2)|^{p-1} \sign F'(|n|^2) \big\|_{p/(p-1)} \| f \cdot n \|_p +\io |\nabla n|^2
   \non\\
  &\mbox{}~~~~~ \le c \big\| F'(|n|^2)| \big\|_{p}^{p-1} \| f \|_p +c\non\\
 \label{co:n4} 
  & \mbox{}~~~~~ 
   \le \sigma \big\| F'(|n|^2)| \big\|_{p}^{p} + c_\sigma\| f \|_p^p +c,
\eea
where we also used H\"older's and Young's inequalities  {and the apriori bounds \eqref{st:12}}.

Now, note that 
\be\label{co:n5} 
 \frac12 \io |F'(|n|^2)|^{p-1} \sign F'(|n|^2) \ddt | n |^2
  = \ddt \io \Gamma_p(|n|^2),
\ee 
where the function $\Gamma_p$ is defined by the \rhs \ above and it is bounded from below.
Notice that $\lim_{r\to 1^-} \Gamma_p(r)<+\infty$ and that 
\be\label{co:n6} 
  \io | F'(|n|^2) |^p |n|^2
   \ge \frac 12 \io | F'(|n|^2) |^p - c
\ee
(to see this,  split the integral into the subregions 
$|n|^2\le 1/2$, where $F'$ is bounded  and $|n|\ge 1/2$ which
gives the control from below). Hence, taking $\sigma<1/2$, we see that 
the first term on the \rhs\ of \eqref{co:n4} is controlled. On the other
hand, integrating in time, we may note that the latter term
in \eqref{co:n4} is also controlled by \eqref{cond:f}.
As a consequence, we obtain  first 
\[
\|F'(|n|^2)\|_{L^p((0,T)\times\tor)}\leq c 
\]
and, as a consequence,
\[
\|\partial {\mathcal F}(n)\|_{L^p((0,T)\times\tor)}\leq c\,.
\]
Finally, comparing terms in \eqref{cp:11b} and
applying elliptic regularity results of Agmon-Douglis-Nirenberg
type, we get the bound 
\be
\| n_t \|_{L^{p}(0,T;L^{p}(\tor))} + \| \Delta n \|_{L^{p}(0,T;L^{p}(\tor))} \le c,
\ee
where we also used the regularity $n_0 \in W^{1,\frac{3}{2}}(\tor)$ which is
actually implied by our assumption~\eqref{hp:n0}. 
\end{proof}
\noindent%
In the case when the anisotropy coefficient $\eps$ is small enough compared
to the other parameters, we can prove some additional estimates.
This is stated in the following
\begin{lemma}[$H^2$-estimates]\label{lem:n}
  Let us assume that the initial data satisfy \eqref{hp:init}--\eqref{hp:n0}. 
 Furthermore, let $\eps>0$ be small enough. Then, we have 
 \be\label{st:41}
   \| \Phi \|_{L^2(0,T;H^2(\tor))} 
    + \| n \|_{L^2(0,T;H^2(\tor))} \le c.
 \ee 
\end{lemma}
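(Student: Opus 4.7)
The plan is to derive coupled $H^2$-estimates for $n$ and $\Phi$ in which the smallness of $\eps$ closes the bootstrap. The key initial observation is that the coefficient matrix $\Id+\eps\,n\otimes n$ in \eqref{eq:Phi} has eigenvalues in $[1,1+\eps]$, so its ellipticity ratio tends to $1$ as $\eps\to 0$. In the quantitative Meyers-type result invoked in Lemma~\ref{lem:meyer}, the exponent $p_M$ depends only on this ratio and can be made arbitrarily large for small enough $\eps$. I would therefore begin by choosing $\eps$ small enough that $p_M\ge 4$, so that $\nabla\Phi\in L^\infty(0,T;L^4(\tor))$ uniformly; consequently $\eps(\nabla\Phi\otimes\nabla\Phi)n\in L^\infty(0,T;L^2(\tor))$, which removes one obstruction to improved regularity of the director equation.

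For the $\Phi$-estimate I would multiply \eqref{eq:Phi} by $-\Delta\Phi$ and integrate by parts. The leading correction
\[
\eps\int_{\tor} n_in_j(\partial^2_{ij}\Phi)\,\Delta\Phi
\]
is bounded by $\eps\|\nabla^2\Phi\|_{L^2}\|\Delta\Phi\|_{L^2}$ thanks to $|n|\le 1$; since $\|\nabla^2\Phi\|_{L^2}=\|\Delta\Phi\|_{L^2}$ on the torus for mean-zero functions (by Plancherel, using $\sum_{i,j}\xi_i^2\xi_j^2=|\xi|^4$), this term is absorbed into the leading $\|\Delta\Phi\|_{L^2}^2$ for small $\eps$. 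The lower-order correction $\eps\int\partial_i(n_in_j)\,\partial_j\Phi\,\Delta\Phi$ is controlled via Hölder by $c\eps\|\nabla n\|_{L^6}\|\nabla\Phi\|_{L^3}\|\Delta\Phi\|_{L^2}$, combining the improved Meyers bound on $\nabla\Phi$ with the Sobolev embedding $H^1\hookrightarrow L^6$ applied to $\nabla n$ (assuming $n\in H^2$). Integrating in time and using Proposition~\ref{prop:maxprinc} for $c_p-c_m$ yields an inequality of the form $\|\Phi\|_{L^2(0,T;H^2)}^2\le C+c\eps\,\|n\|_{L^2(0,T;H^2)}^2$.

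For the $n$-estimate I would test \eqref{eq:n} by $-\Delta n$. Convexity of $\mcF$ (made rigorous via Yosida-type regularization) gives $\int\partial\mcF(n)\cdot(-\Delta n)\ge 0$ upon passing to the limit in the smooth approximation. Using $\dive v=0$, the transport term reduces to $\int\nabla v:(\nabla n\otimes\nabla n)$, which by the Gagliardo--Nirenberg inequality $\|\nabla n\|_{L^4}^2\le c\|\nabla n\|_{L^2}^{1/2}\|\Delta n\|_{L^2}^{3/2}$ and Young's inequality contributes $\delta\|\Delta n\|_{L^2}^2+c_\delta\|\nabla v\|_{L^2}^4\|\nabla n\|_{L^2}^2$; the first piece absorbs into the left-hand side. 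The remaining terms $D(v)n$ and $\Omega(v)n$ lie in $L^2_{t,x}$ thanks to $|n|\le 1$ and the energy bound on $\nabla v$, while $\eps|\nabla\Phi|^2 n\in L^\infty_tL^2_x$ by the first step. A Gronwall argument then produces $\|n\|_{L^2(0,T;H^2)}\le C$, and substituting into the $\Phi$-estimate closes the bootstrap for $\eps$ sufficiently small.

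The principal technical difficulty is the remainder $c\|\nabla v\|_{L^2}^4\|\nabla n\|_{L^2}^2$: in three dimensions the quantity $\int_0^T\|\nabla v\|_{L^2}^4\,dt$ is not controlled by the basic energy estimate \eqref{est:en}. At the level of a priori bounds on presumptive smooth solutions (as in the rest of Section~\ref{sec:apriori}) this causes no real obstruction, since $v$ can be assumed regular enough to make the argument formal; but reproducing the bound within an approximation scheme would require either additional integrability of $v$, or exploitation of the Lin--Liu type structural cancellation between the Korteweg term $-\dive(\nabla n\odot\nabla n)$ in \eqref{eq:v} and the transport term in \eqref{eq:n}.
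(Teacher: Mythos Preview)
Your approach has a genuine gap in the treatment of the transport term $\int_{\tor}(v\cdot\nabla n)\cdot\Delta n$. You use the Gagliardo--Nirenberg inequality
\[
\|\nabla n\|_{L^4}^2 \le c\,\|\nabla n\|_{L^2}^{1/2}\|\Delta n\|_{L^2}^{3/2},
\]
which after Young's inequality forces the residual term $c_\delta\|\nabla v\|_{L^2}^4\|\nabla n\|_{L^2}^2$. As you yourself note, $\int_0^T\|\nabla v\|_{L^2}^4\,dt$ is \emph{not} controlled by the energy estimate in three dimensions. Your attempted escape---that ``$v$ can be assumed regular enough to make the argument formal''---misreads the purpose of Section~\ref{sec:apriori}: the whole point of the a~priori bounds is that the constants depend only on the norms of the initial data and the structural parameters, uniformly over any approximating family. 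A Gronwall argument with $\|\nabla v\|^4$ as coefficient yields a bound of the form $\exp\big(c\int_0^T\|\nabla v\|^4\big)$, which is not uniform and therefore useless for weak sequential stability.

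The paper closes this estimate by a different interpolation that exploits the constraint $|n|\le 1$. Instead of interpolating $\nabla n$ between $H^1$ and $H^2$, one uses
\[
\|\nabla n\|_{L^4}^2 \le c\,\|n\|_{L^\infty}\,\|n\|_{H^2} \le c\,\|n\|_{H^2},
\]
(the endpoint Gagliardo--Nirenberg inequality $\|\nabla z\|_{L^4}\le c\|z\|_{L^\infty}^{1/2}\|z\|_{H^2}^{1/2}$). This gives
\[
\Big|\int_{\tor}(\nabla n\odot\nabla n):\nabla v\Big| \le c\,\|n\|_{H^2}\,\|\nabla v\|_{L^2} \le \tfrac16\|\Delta n\|_{L^2}^2 + c\,\|\nabla v\|_{L^2}^2 + c,
\]
and now only $\|\nabla v\|_{L^2}^2$ appears, which \emph{is} time-integrable by \eqref{st:11}. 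The same $L^\infty$--$H^2$ interpolation, applied to $\Phi$ together with the $L^\infty$ bound of Lemma~\ref{lem:phi}, handles the term $\eps(\nabla\Phi\otimes\nabla\Phi)n$ without invoking any quantitative dependence of the Meyers exponent on~$\eps$; the smallness of $\eps$ is used only to absorb the $H^2$ coupling between $n$ and $\Phi$ at the final step. Your Meyers-based treatment of the $\nabla\Phi$ terms is a legitimate alternative for that part, but it does not repair the transport-term estimate.
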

\begin{proof}
We proceed in a natural way by testing \eqref{eq:n} by $-\Delta n$. 
Then, we can preliminarily observe that, by convexity of $\mcF$ 
(and consequent monotonicity of the subdifferential),
\be\label{st:51}
  - \io \partial \mcF(n)\cdot \Delta n \ge 0.
\ee 
As already noted before, this property, due to nonsmoothness of $\partial\mcF$, 
may require an approximation argument to be proved rigorously.

That said, we arrive at the bound 
\bea\non 
  & \frac12 \ddt \| \nabla n \|_H^2
   + \| \Delta n \|_{H}^2
   = \io ( v \cdot \nabla n ) \cdot \Delta n
      - \io ( \Omega(v) n ) \cdot \Delta n\\
 \label{co:51}  
  &\mbox{}~~~~~
   + \io ( D(v) n ) \cdot \Delta n
   - \io \eps (( \nabla \Phi \otimes \nabla \Phi ) n) \cdot \Delta n
   =: \sum_{j=1}^4 I_j.
\eea
and we need to estimate the terms $I_j$ on the right-hand side. A key role 
will be played by the inequality
\be\label{GN:L4}
  \| \nabla z \|_{L^4(\Omega)} 
   \le c \| z \|_{L^\infty(\Omega)}^{1/2} \| z \|_{H^2(\Omega)}^{1/2} ,
\ee
holding for every $z\in H^2(\Omega)$, $\Omega$ being a smooth bounded domain 
of $\RR^3$ (for instance $\Omega=\tor$). Then, integrating
by parts and using \eqref{eq:vincompres} with the periodic
boundary conditions, we have
\bea\non
 I_1 & = - \io ( \nabla n \odot \nabla n ): \nabla v
  \le \| \nabla n \|_{L^4(\Omega)}^2 \| \nabla v \|_{H} \\
 \non
  & \le c \| n \|_{L^\infty(\Omega)} \big( \| n \|_{H} + \| \Delta n \|_{H} \big)
         \| \nabla v \|_{H} \\
 \label{co:52}
  & \le c + \frac16 \| \Delta n \|_{H}^2 + c \| \nabla v \|_{H}^2,
\eea
where we used in an essential way the property $|n| \le 1$ almost everywhere.

Next, it is clear that
\be\label{co:53}
  I_2 + I_3 
   \le c \| n \|_{L^\infty(\Omega)} \| \nabla v \|_H \| \Delta n \|_H 
   \le \frac16 \| \Delta n \|_H^2 + c \| \nabla v \|_H^2,
\ee 
and, finally,
\bea\non
  I_4
   & \le c \eps \| n \|_{L^\infty(\Omega)} \| \nabla \Phi \|_{L^4(\Omega)}^2 \| \Delta n \|_H 
   \le c \eps \| \Phi \|_{L^\infty(\Omega)} \| \Phi \|_{H^2(\Omega)} \| \Delta n \|_H \\
 \label{co:54}  
   & \le c \eps^2 \| \Phi \|_{H^2(\Omega)}^2 + \frac16 \| \Delta n \|_H^2,
\eea  where for the last inequality we implicitly used Lemma~\ref{lem:phi}.

Taking \eqref{co:52}-\eqref{co:54} into account, \eqref{co:51} implies
\be\label{co:55}
  \ddt \| \nabla n \|_H^2
   + \| \Delta n \|_{H}^2
   \le c + c \| \nabla v \|_{H}^2
   + c\eps^2\|\Phi\|_H^2+ c \eps^2 \| \Delta\Phi \|_{H}^2,
\ee
where we point out that the constants $c$, in particular the last one, 
may depend on the various parameters of the problem, but are independent
of the coefficient $\eps$.

In order to control the last term, we apply elliptic regularity results
to \eqref{eq:Phi} (or, in other words, we test it by $-\Delta\Phi$) to
obtain
\bea\non 
  \| \Delta \Phi \|_{H}
  & \le c \big( \eps \| \nabla n \|_{L^4(\Omega)} \| n \|_{L^\infty(\Omega)} \| \nabla \Phi \|_{L^4(\Omega)} 
   + \eps \| D^2 \Phi \|_{H} \| n \|_{L^\infty(\Omega)}^2
   + \| c_p - c_m \|_H  \big)\\
 \non
  & \le c \big( \eps \| \Delta n \|_{H}^{1/2} \| \Delta \Phi \|_{H}^{1/2} \| \Phi \|_{L^\infty(\Omega)}^{1/2}
   + \eps \| \Delta \Phi \|_{H} 
   + 1 \big)\\
 \label{co:56}  
  & \le \frac14 \| \Delta n \|_{H} + c \eps \| \Delta \Phi \|_{H} + c.
\eea
where we have repeatedly used \eqref{GN:L4}. At this point, we may assume $\eps$ so small that $c \eps \le 1/2$. 
Then, the second term on the \rhs\ can be absorbed by the corresponding quantity on the \lhs. 
Squaring the resulting relation, we then deduce
\be\label{co:57}
  \| \Delta \Phi \|_{H}^2
   \le \frac14 \| \Delta n \|_{H}^2 
   + c.
\ee
Replacing into \eqref{co:55}, we arrive at
\be\label{co:58}
  \ddt \| \nabla n \|_H^2
   + \frac34\| \Delta n \|_{H}^2
   \le c + c \| \nabla v \|_{H}^2
   + c \frac{\eps^2}2 \| \Delta n \|_{H}^2,
\ee
which, possibly assuming $\eps$ small (such that $c\eps^2/2\leq 1/4$),  reduces to
\be\label{co:58bis}
  \ddt \| \nabla n \|_H^2
   + \frac12 \| \Delta n \|_{H}^2
   \le c + c \| \nabla v \|_{H}^2.
\ee
Integrating in time and recalling \eqref{st:11} we obtain the estimate
for $n$ in \eqref{st:41}. The estimate for $\Phi$ is then deduced by integrating
in time~\eqref{co:57}.

\end{proof}


\section{Weak sequential stability: proof of Theorem~\ref{thm:stab}}
\label{sec:limit}

Let us assume $(c_{p,k},c_{m,k},\Phi_k,v_k,n_k)$ to be a family of approximating
solutions complying with the estimates derived in the previous section
uniformly with respect to the parameter $k\in \NN$. We will then prove that there exists a (non-relabelled) sequence of the above sequence 
tending, in a suitable way, to a quintuple $(c_{p},c_{m},\Phi,v,n)$ solving
system~\eqref{eq:cp}-\eqref{eq:n} in the sense specified in Definition~\ref{def:weak}.

To this aim, we start deducing some convergence properties (as mentioned,
we will always assume to hold up to the extraction of subsequences)
arising as a consequence of the bounds \eqref{st:11}-\eqref{st:14}, 
\eqref{st:21}, \eqref{st:22}, \eqref{st:14b}, \eqref{st:31},  \eqref{st:a1}--\eqref{st:a3} and \eqref{st:41}.   Namely, we have
that  there exists $\lambda\in L^{p_0}(0,T;L^{p_0}(\tor))$ such that
\bea\label{li:11}
  & v_k \to v \quad\text{weakly star in }\, L^\infty(0,T;H) \cap L^2(0,T;V),\\
 \label{li:12}
  & n_k \to n \quad\text{weakly star in }\, L^\infty(0,T;V) \cap L^\infty(0,T;L^\infty(\tor)),\\
 \label{li:13}
  & \Phi_k \to \Phi \quad\text{weakly star in }\, L^\infty(0,T;V) \cap L^\infty(0,T;L^\infty(\tor)),\\
 \label{li:14}
  & c_{p,k},~c_{m,k} \to c_p,~c_m \quad\text{weakly star in }\, L^2(0,T;V) \cap L^\infty(0,T;L^\infty(\tor)),
  \\
  \label{li:15}
  & \nabla\Phi_k\to \nabla \Phi\quad\text{weakly star in }L^\infty(0,T;L^{p_M}(\tor)),
  \\
  \label{li:16}
  &  \partial_t n_k, \, \Delta n_k,\,  \partial \mcF(n_k)\to n_t, \, \Delta n, \, \lambda\quad\text{weakly in }L^{p_0}(0,T;L^{p_0}(\tor)),
\eea
where in deducing \eqref{li:13} we also used the normalization $(\Phi_k)\oo = 0$ and $p_0,p_M$ are the exponents 
introduced in Lemma~\ref{lem:meyer}. 
Of course this implies in particular $\Phi\oo = 0$. Let us notice that, in the limit, we preserve
the boundedness conditions $0\le c_p\le \bar c$, $0\le c_m\le \bar c$, $|n| \le 1$
almost everywhere in~$\tor$.
In addition to that,  if $\eps$ is sufficiently small (cf.~Lemma~\ref{lem:n}), we also get:
\be\label{li:17}
\Phi_k, \, n_k\to \Phi, \, n \quad\text{weakly in }L^2(0,t;H^2(\tor)).
\ee
In the following {we show how to treat the passing to the limit just for the most difficult terms}. We first note that, by \eqref{st:11} and interpolation, 
\be\label{li:21}
  \| v_k \|_{L^{4}(0,T;L^{3}(\tor))} \le c,
\ee
whence, using \eqref{st:22}, there follows 
\be\label{li:22}
  \| v_k\cdot \nabla c_{p,k} \|_{L^{4/3}(0,T;L^{6/5}(\tor))} 
   + \| v_k\cdot \nabla c_{m,k} \|_{L^{4/3}(0,T;L^{6/5}(\tor))} 
  \le c.
\ee
Then, using uniform boundedness of $c_{p,k}$, $c_{m,k}$  { as well as the bounds \eqref{st:14}, \eqref{st:21}}
it is not difficult to deduce from \eqref{eq:cp}, \eqref{eq:cm} that{
\be\label{li:23}
  \| \de_t c_{p,k} \|_{L^{4/3}(0,T;V')} + \| \de_t c_{m,k} \|_{L^{4/3}(0,T;V')} \le c.
\ee
}
Hence, {taking also into account \eqref{st:a2},} the Aubin-Lions lemma with the uniform boundedness property gives 
\be\label{li:24}
  c_{p,k},~c_{m,k},~n_k \to c_p,~c_m,~n \quad\text{strongly in }\, L^q(0,T;L^q(\tor))
   ~~\forall\,q\in[1,\infty).
\ee

Then, using \eqref{li:16}, \eqref{li:24}, the monotonicity  of 
$\partial\mcF$, and the result \cite[Prop.~1.1, p.~42]{barbu}, we get $\lambda=\partial\mcF(n)$. 
Moreover, by \eqref{li:13} and \eqref{li:14} we get
\be
\|c_{p,k}\nabla\Phi_k\|_{L^\infty(0,T;H)}+\|c_{m,k}\nabla\Phi_k\|_{L^\infty(0,T;H)}  \leq c, \non
\ee
whence 
\be 
c_{p,k}\nabla\Phi_k\to c_p\nabla \Phi, \quad c_{m,k}\nabla\Phi_k\to c_m\nabla \Phi  \quad\text{weakly star in }L^\infty(0,T;H),\non
\ee
where we have used also \eqref{li:24}. Using the Gagliardo-Nirenberg inequality (cf.~\cite{nire}) together with \eqref{li:16} and the fact that $|n_k|\leq 1$, 
we get 
\be
\|\nabla n_k\odot\nabla n_k\|_{L^s(0,T;L^s(\tor))}\leq c\quad\text{for some exponent $s>1$},\non
\ee
Finally, using the bound on $\partial_tn_k$ in \eqref{li:16} and again  the Gagliardo-Nirenberg inequality (cf.~\cite{nire}) interpolating between the spaces $L^\infty(0,T; L^\infty(\tor))$ and $L^{p_0}(0,T; W^{2,p_0}(\tor))$ at place $1/2$, we also get the convergence
\be
\nabla n_k\odot\nabla n_k\to \nabla n\odot\nabla n \quad \text{weakly in }L^s(0,T;L^s(\tor)),
\ee 
which is sufficient in order to conclude the passage to the limit as $k\to\infty$ in order to obtain the claimed weak solutions.


\begin{appendix}

\section*{Acknowledgements}

ER and GS were partially supported by GNAMPA (Gruppo Nazionale per l'Analisi Matematica, la Probabilit\`a e le loro Applicazioni)
of INdAM (Istituto Nazionale di Alta Matematica). This research has been performed in the framework of the project Fondazione Cariplo-Regione Lombardia MEGAsTAR
``Matema\-tica d'Eccellenza in biologia ed ingegneria come acceleratore di una nuova strateGia per l'ATtRattivit\`a dell'ateneo pavese''.
This research was also supported by the Italian Ministry of Education, University and Research (MIUR): Dipartimenti di Eccellenza Program (2018--2022) -
Dept. of Mathematics ``F. Casorati'', University of Pavia. The work of A.Z. is supported by the Basque Government through the BERC 2018-2021
program, by Spanish Ministry of Economy and Competitiveness MINECO through BCAM
Severo Ochoa excellence accreditation SEV-2017-0718 and through project MTM2017-82184-R
funded by (AEI/FEDER, UE) and acronym ``DESFLU''. The research of E.F. and V.M. leading to these results has received 
funding from the
Czech Sciences Foundation (GA\v CR), Grant Agreement
18--05974S. The Institute of Mathematics of the Academy of Sciences of
the Czech Republic is supported by RVO:67985840.

\bigskip

\end{appendix}


\end{document}